\def\@tocline#1#2#3#4#5#6#7{\relax
  \ifnum #1>\c@tocdepth 
  \else
    \par \addpenalty\@secpenalty\addvspace{#2}%
    \begingroup \hyphenpenalty\@M
    \@ifempty{#4}{%
      \@tempdima\csname r@tocindent\number#1\endcsname\relax
    }{%
      \@tempdima#4\relax
    }%
    \parindent\z@ \leftskip#3\relax
    \advance\leftskip\@tempdima\relax
    \rightskip\@pnumwidth plus4em \parfillskip-\@pnumwidth
    #5\leavevmode\hskip-\@tempdima
      \ifcase #1
       \or\or \hskip 2em \or \hskip 2em \else \hskip 3em \fi%
      #6\nobreak\relax
    \dotfill\hbox to\@pnumwidth{\@tocpagenum{#7}}\par
    \nobreak
    \endgroup
  \fi}
\newtheorem{theorem}{Theorem}[section]
\newtheorem{lemma}[theorem]{Lemma}
\newtheorem{proposition}[theorem]{Proposition}
\newtheorem{corollary}[theorem]{Corollary}
\newtheorem{remark}[theorem]{Remark}
\crefname{section}{Sect.}{section}
\numberwithin{equation}{section}
\DeclareMathOperator{\const}{const.}
\newcommand\Bprime{B^{\hspace{.2mm}\prime}}
\newcommand{\co}{\operatorname{co}}
\newcommand\dprime{d^{\hspace{.3mm}\prime}}
\newcommand\deltaprime{\delta^{\hspace{.2mm}\prime}}
\newcommand\deltasecond{\delta^{\hspace{.2mm}\prime\prime}}
\renewcommand\epsilon{\varepsilon}
\newcommand\GammaRprime{\Gamma_{\!R}^{\hspace{.2mm}\prime}}
\newcommand\Gprime{G^{\hspace{.2mm}\prime}}
\newcommand\N{\mathbb{N}}
\newcommand\NRprime{N_{\hspace{-.2mm}R}^{\hspace{.2mm}\prime}}
\renewcommand\phi{\varphi}
\newcommand\Pprime{P^{\hspace{.4mm}\prime}}
\newcommand\Psecond{P^{\hspace{.4mm}\prime\prime}}
\newcommand\dsecond{d^{\hspace{.4mm}\prime\prime}}
\newcommand\msb{\hspace{-.5mm}}
\newcommand\msf{\hspace{.5mm}}
\newcommand\ssb{\hspace{-.3mm}}
\newcommand\ssf{\hspace{.3mm}}
\newcommand\vsb{\hspace{-.1mm}}
\newcommand\vsf{\hspace{.1mm}}
\begin{document}
\title[Bottom of the $L^2$ spectrum of the Laplacian 
on locally symmetric spaces]{Bottom of the $L^2$ spectrum of 
the Laplacian \\ on locally symmetric spaces}

\author{Jean-Philippe ANKER and Hong-Wei ZHANG}

\keywords{Locally symmetric space, $L^2$ spectrum, Poincar\'e series , critical exponent,  Green function, heat kernel}

\makeatletter
\@namedef{subjclassname@2020}{\textnormal{2020}
    \it{Mathematics Subject Classification}}
\makeatother
\subjclass[2020]{22E40, 11N45, 35K08, 47A10, 58J50}

\maketitle

\vspace{-10pt}
\begin{center}
\footnotesize\it
    In memory of Michel Marias (1953-2020), 
    who introduced us to locally symmetric spaces 
\end{center}

\begin{abstract}
We estimate the bottom of the $L^2$ spectrum of the Laplacian on locally symmetric spaces in terms of the critical exponents of appropriate Poincar\'e series.
Our main result is the higher rank analog of a characterization
due to Elstrodt, Patterson, Sullivan and Corlette in rank one.
It improves upon previous results obtained by Leuzinger and Weber in higher rank.
\end{abstract}


\section{Introduction}\label{intro}
We adopt the standard notation and refer to \cite{Hel2000} for more details. Let $G$ be a semi-simple Lie group, connected, noncompact, with finite center, and $K$ be a maximal compact subgroup of $G$. The homogeneous space $X = G/K$ is a Riemannian symmetric space of noncompact type. Let $\mathfrak{g} = \mathfrak{k} \oplus \mathfrak{p}$ be the Cartan decomposition of the Lie algebra of $G$. The Killing form of $\mathfrak{g}$ induces a $K$-invariant inner product $\langle\,.\,,\,.\,\rangle$ on $\mathfrak{p}$, hence a $G$-invariant Riemannian metric on $G/K$. Fix a maximal abelian subspace $\mathfrak{a}$ in $\mathfrak{p}$. We identify $\mathfrak{a}$ with its dual $\mathfrak{a}^{*}$ by means of the inner product inherited from $\mathfrak{p}$.
Let $\Gamma$ be a discrete torsion-free subgroup of $G$ that acts freely and properly discontinuously on $X$. Then $Y\!=\Gamma\backslash X$ is a locally symmetric space, whose Riemannian structure is inherited from $X$. We denote by $d(\,.\,,\,.\,)$ the joint Riemannian distance on $X$ and $Y$,  by $n$ their joint dimension, and by $\ell$ their joint rank, which is the dimension of $\mathfrak{a}$.

Let $\Sigma\subset\mathfrak{a}$ be the root system of $(\mathfrak{g},\mathfrak{a})$ and let $W$ be the associated Weyl group. Choose a positive Weyl chamber $\mathfrak{a}^+\subset\mathfrak{a}$ and let $\Sigma^+{\subset\Sigma}$ be the corresponding subsystem of positive roots. Denote by $\rho= \frac{1}2 \sum_{\alpha\in\Sigma^+}m_{\alpha} \alpha$ the half sum of positive roots counted with their multiplicities. Occasionally we shall need the reduced root system
$\Sigma_{\text{red}}=\lbrace\alpha\in\Sigma\,|\,\frac\alpha2\notin\Sigma\rbrace$.\\

Consider the classical Poincar\'e series
\begin{align}\label{PSeries}
P_{s}(xK,yK) =\sum\nolimits_{\gamma\in\Gamma}e^{-sd(xK,\gamma yK)}
\qquad\forall\,s>0,\,\forall\,x,y\in G
\end{align}
and denote by
$\delta(\Gamma)=\inf\lbrace s>0\,|\,P_s(xK,yK) <+\infty\rbrace$
its critical exponent, which is independent of $xK$ and $yK$.
Recall that $\delta(\Gamma)\in[0,2\hspace{.2mm}\|\rho\|]$ may be also defined by
\begin{align*}
\delta(\Gamma)=\limsup_{R\to+\infty}\frac{\log N_{R}(xK,yK)}R
\qquad\forall\,x,y\in G,
\end{align*}
where $N_{R}(xK,yK)=|\lbrace\gamma\in\Gamma\,|\,d(xK,\gamma yK)\le R\rbrace|$ denotes the orbital counting function. Finally, let $\Delta_Y$ be the Laplace-Beltrami operator on $Y$ and let $\lambda_0(Y)$ be the bottom of the $L^2$ spectrum of $-\Delta_Y$.
The following celebrated result, due to Elstrodt (\cite{Els1973a}, \cite{Els1973b}, \cite{Els1974}), Patterson \cite{Pat1976}, Sullivan \cite{Sul1987} and Corlette \cite{Cor1990}, expresses $\lambda_0(Y)$ in terms of $\rho$ and $\delta(\Gamma)$ in rank one.

\begin{theorem}\label{corlette}
In the rank one case ($\ell=1$), we have
\begin{align}\label{rank 1}
\lambda_0(Y) =
\begin{cases}
\|\rho\|^2
&\qquad\textnormal{if}\;\;0\le\delta(\Gamma)\le\|\rho\|,\\
\|\rho\|^2-(\delta(\Gamma)-\|\rho\|)^2
&\qquad\textnormal{if}\;\;\|\rho\|\le\delta(\Gamma)\le2\hspace{.2mm}\|\rho\|.
\end{cases}
\end{align}
\end{theorem}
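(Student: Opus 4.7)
The plan is to prove the two inequalities $\lambda_0(Y)\ge\mathrm{RHS}$ and $\lambda_0(Y)\le\mathrm{RHS}$ separately, where $\mathrm{RHS}$ denotes the right-hand side of \eqref{rank 1}.

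For the lower bound, I would work with the Green function of $-\Delta_X-\lambda$ for $\lambda<\|\rho\|^2$. Writing $\lambda=\|\rho\|^2-s^2$ with $s\in(0,\|\rho\|]$, the rank-one heat kernel estimates (Davies--Mandouvalos in real hyperbolic geometry, Anker--Ji in the general rank-one case) yield, after Laplace transform, the sharp asymptotic
\[
G^X_\lambda(x,y)\asymp (1+d(x,y))^{\beta}\,e^{-(\|\rho\|+s)\,d(x,y)}\qquad\text{for }d(x,y)\ge 1,
\]
with an explicit rank-one exponent $\beta$. Comparing with \eqref{PSeries}, the averaged kernel $\sum_{\gamma\in\Gamma}G^X_\lambda(x,\gamma y)$ converges exactly when $\|\rho\|+s>\delta(\Gamma)$. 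When it converges it defines a positive Green function for $-\Delta_Y-\lambda$ on $Y$, which by the standard positivity criterion (Agmon/Fischer-Colbrie--Schoen, used in this context by Sullivan) forces $\lambda\le\lambda_0(Y)$. Letting $s$ decrease to the threshold $\max(0,\delta(\Gamma)-\|\rho\|)$ gives the required lower bound in both cases.

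For the upper bound, the easy inequality $\lambda_0(Y)\le\lambda_0(X)=\|\rho\|^2$ follows from the covering $X\to Y$: any test function in $C_c^\infty(X)$ supported in an injectivity ball descends isometrically to $Y$, so the Rayleigh quotient infima satisfy $\lambda_0(Y)\le\lambda_0(X)$, and $\lambda_0(X)=\|\rho\|^2$ is the spherical Plancherel bottom. This settles case (i). In case (ii), the sharper bound requires Patterson--Sullivan theory: construct a $\delta(\Gamma)$-conformal measure $\mu$ on the geometric boundary $\partial X$ by Patterson's limiting procedure applied to the normalized Poincar\'e series at $s\to\delta(\Gamma)^+$, then form the Poisson-type transform
\[
u(x)\;=\;\int_{\partial X} e^{-\delta(\Gamma)\,\beta_{\xi}(x,o)}\,d\mu(\xi),
\]
where $\beta_\xi$ is the Busemann function based at $o$. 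This $u$ is positive, $\Gamma$-invariant, and satisfies $-\Delta_X u=\bigl[\|\rho\|^2-(\delta(\Gamma)-\|\rho\|)^2\bigr]u$ on $X$, hence descends to $Y$. Truncating $u$ against an exhausting sequence of compact sets in a fundamental domain yields approximate $L^2$-eigenfunctions of $-\Delta_Y$ whose Rayleigh quotient converges to $\|\rho\|^2-(\delta(\Gamma)-\|\rho\|)^2$.

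I expect the sharp two-sided Green function asymptotic, uniform in $s$ up to the edge $s\to 0^+$, to be the most delicate analytic input, since it is precisely what calibrates the decay rate to $\|\rho\|+s$ and hence to the critical exponent. It is classical via explicit formulas in real and complex hyperbolic geometry (Elstrodt), but relies on the full rank-one heat kernel machinery in the quaternionic and octonionic cases. On the upper-bound side, the historically subtle point is verifying that the Poisson transform $u$ built from the Patterson--Sullivan density solves exactly the expected eigenvalue equation; in full rank-one generality this is a horospherical mean-value computation, first carried out by Corlette.
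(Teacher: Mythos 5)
Your lower bound argument is essentially the one the paper (following Corlette's \S 4) uses for Theorems~\ref{TheoremFirstImprovement} and~\ref{second improvement}, and it is correct: the sharp rank-one Green function asymptotic calibrates the averaged kernel $\sum_\gamma g_\zeta(\cdot,\gamma\cdot)$ to the Poincar\'e series $P_{\|\rho\|+\zeta}$, and the positivity criterion gives $\lambda_0(Y)\ge\|\rho\|^2-\zeta^2$ whenever the series converges.

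The upper bound, however, contains a genuine gap. In case (ii) you build the Patterson--Sullivan density $\mu$ and the positive, $\Gamma$-invariant eigenfunction $u$ with $-\Delta_X u=\lambda u$, $\lambda=\|\rho\|^2-(\delta(\Gamma)-\|\rho\|)^2$, and then assert that truncating $u$ against an exhaustion of a fundamental domain produces test functions on $Y$ with Rayleigh quotient tending to $\lambda$. That step does not hold in general. A positive $\lambda$-eigenfunction on $Y$ only yields, via the Cheng--Yau/Sullivan/Fischer-Colbrie--Schoen criterion, the inequality $\lambda\le\lambda_0(Y)$ --- the \emph{lower} bound you already have --- and truncating it does not control the Rayleigh quotient unless $u$ decays fast enough to be $L^2$, which requires an extra hypothesis (geometric finiteness). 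On manifolds with exponential volume growth the cutoff's gradient term on the annulus is comparable to the $L^2$ mass of the truncation, so the Rayleigh quotient does not converge to $\lambda$ (already the constant function on $\mathbb{H}^2$, truncated to balls, gives Rayleigh quotients bounded away from $0$). The same confusion affects case (i): approximating $\lambda_0(X)=\|\rho\|^2$ requires test functions spread over large balls, which in general cannot be squeezed into an injectivity ball of $Y$; the clean argument is to lift a positive $\lambda_0(Y)$-eigenfunction from $Y$ to $X$ and apply the positivity criterion upstairs. The mechanism that actually delivers the upper bound --- and is what the paper invokes from Corlette's \S 4, and reproves in the proof of Theorem~\ref{TheoremFirstImprovement} via the divergence side of \eqref{SeriesGamma} --- is the converse of your Green-series argument: if $\lambda_0(Y)>\|\rho\|^2-\zeta^2$, the resolvent at $\|\rho\|^2-\zeta^2$ is bounded on $L^2(Y)$, its kernel is precisely $\sum_\gamma g_\zeta(\cdot,\gamma\cdot)$ (average the heat kernel and integrate in $t$), hence this series is finite; the two-sided Green function estimate then forces $P_{\|\rho\|+\zeta+\epsilon}<\infty$ for every $\epsilon>0$, i.e., $\delta(\Gamma)\le\|\rho\|+\zeta$. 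Letting $\zeta\downarrow\max(0,\delta(\Gamma)-\|\rho\|)$ yields the upper bound in both cases, with no appeal to Patterson--Sullivan densities at all.
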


This result was extended in higher rank as follows by Leuzinger \cite{Leu2004} and Weber \cite{Web2008}.
Let $\rho_{\text{min}}=\min_{H\in\overline{\mathfrak{a}^+},\|H\|=1}\langle\rho,H\rangle\in(0,\|\rho\|]$.
Notice that $\rho_{\text{min}}={\|\rho\|}$ in rank one and thus the following theorem reduces to Theorem \ref{corlette}.

\begin{theorem}\label{leuzinger}
In the general case ($\ell\ge1$), the following estimates hold:
\begin{itemize}[leftmargin=*]
\item[$\bullet$]
Upper bound:
\begin{align*}
\lambda_0(Y)\le
\begin{cases}
\,\|\rho\|^2
&\qquad\textnormal{if}\;\;0\le\delta(\Gamma)\le\|\rho\|,\\
\,\|\rho\|^2-(\delta(\Gamma)-\|\rho\|)^2
&\qquad\textnormal{if}\;\;\|\rho\|\le\delta(\Gamma)\le2\hspace{.2mm}\|\rho\|.
\end{cases}
\end{align*}
\item[$\bullet$]
Lower bound:
\begin{align*}
\lambda_0(Y)\ge
\begin{cases}
\,\|\rho\|^2
&\qquad\textnormal{if}\;\;0\le\delta(\Gamma)\le\rho_{\text{\rm min}},\\
\,\max\hspace{.5mm}\lbrace0,\|\rho\|^2-(\delta(\Gamma)-\rho_{\text{\rm min}})^2\rbrace
&\qquad\textnormal{if}\;\;\rho_{\text{\rm min}}\le\delta(\Gamma)\le2\hspace{.2mm}\|\rho\|.
\end{cases}
\end{align*}
\end{itemize}

\noindent In other terms,
\begin{align*}
\begin{cases}
\,\lambda_0(Y)=\|\rho\|^2
&\textnormal{if}\;\;\delta(\Gamma)\in\bigl[\hspace{.2mm}0,\rho_{\text{\rm min}}\bigr],\\[5pt]
\,\lambda_0(Y)\in
\bigl[\hspace{.2mm}\|\rho\|^2\!-(\delta(\Gamma)\hspace{-.5mm}-\hspace{-.5mm}\rho_{\text{\rm min}})^2,
\|\rho\|^2\hspace{.2mm}\bigr]
&\textnormal{if}\;\;\delta(\Gamma)\in\bigl[\hspace{.3mm}\rho_{\text{\rm min}},\|\rho\|\bigr],\\[5pt]
\,\lambda_0(Y)\in
\bigl[\hspace{.2mm}\|\rho\|^2\!-\hspace{-.5mm}(\delta(\Gamma)\hspace{-.5mm}-\hspace{-.5mm}\rho_{\text{\rm min}})^2,
\|\rho\|^2\!-\hspace{-.5mm}(\delta(\Gamma)\hspace{-.5mm}-\hspace{-.5mm}\|\rho\|)^2\hspace{.2mm}\bigr]
&\textnormal{if}\;\;\delta(\Gamma)\in\bigl[\hspace{.2mm}\|\rho\|,\|\rho\|\hspace{-.5mm}+\hspace{-.5mm}\rho_{\text{\rm min}}\bigr],\\[5pt]
\,\lambda_0(Y)\in\bigl[\hspace{.2mm}0,
\|\rho\|^2\!-\hspace{-.5mm}(\delta(\Gamma)\hspace{-.5mm}-\hspace{-.5mm}\|\rho\|)^2\hspace{.2mm}\bigr]
&\textnormal{if}\;\;\delta(\Gamma)\in\bigl[\hspace{.2mm}
\|\rho\|\hspace{-.5mm}+\hspace{-.5mm}\rho_{\text{\rm min}},2\hspace{.2mm}\|\rho\|\hspace{.2mm}\bigr].
\end{cases}  
\end{align*}
\end{theorem}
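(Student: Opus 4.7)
My strategy is to treat the two bounds separately, using the standard dictionary relating $\lambda_0(Y)$ to (a) the long-time exponential decay rate of the heat kernel $p_t^Y$, for the upper bound, and (b) the convergence of a Poincar\'e series built from a resolvent (Green) kernel on $X$, for the lower bound. Throughout I would use the Cartan decomposition $\gamma = k_1\exp(H(\gamma))k_2$ with $H(\gamma)\in\overline{\mathfrak{a}^+}$, so that $d(yK,\gamma yK) = \|H(\gamma)\|$ when $y=e$, together with the two elementary pinching inequalities $\rho_{\text{min}}\|H\|\le\langle\rho,H\rangle\le\|\rho\|\,\|H\|$ valid on $\overline{\mathfrak{a}^+}$.

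\textbf{Upper bound.} From the spectral identity $\lambda_0(Y)=-\lim_{t\to+\infty}\frac1t\log p_t^Y(yK,yK)$ and the summation formula $p_t^Y(yK,yK)=\sum_{\gamma\in\Gamma}p_t^X(yK,\gamma yK)$, it suffices to produce a good lower bound for the sum on the right. Feeding in the sharp Anker--Ji heat kernel asymptotics on $X$, one has, up to polynomial factors,
\[
p_t^X(yK,\gamma yK) \gtrsim e^{-\|\rho\|^2 t - \langle\rho,H(\gamma)\rangle - d(yK,\gamma yK)^2/(4t)}.
\]
Using $\langle\rho,H\rangle\le\|\rho\|\,\|H\|$, grouping orbit points in annuli $R\le d(yK,\gamma yK)<R+1$, and invoking the definition $N_R(yK,yK)\gtrsim e^{(\delta(\Gamma)-\epsilon)R}$ for $R$ large, I would arrive at
\[
p_t^Y(yK,yK) \gtrsim \sum\nolimits_{R} e^{-\|\rho\|^2 t+(\delta(\Gamma)-\|\rho\|-\epsilon)R-R^2/(4t)}.
\]
Optimizing over $R$ (the dominant term corresponds to $R\approx 2t\max\lbrace\delta(\Gamma)-\|\rho\|,0\rbrace$) and letting $\epsilon\to0$ produces exactly the announced upper bound.

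\textbf{Lower bound.} I would use the classical Sullivan-type criterion: $\lambda_0(Y)\ge\lambda$ whenever the Poincar\'e series $\sum_{\gamma\in\Gamma}G_\lambda^X(yK,\gamma yK)$ of the $\lambda$-Green function on $X$ converges for some (equivalently any) $y$. By subordinating the heat kernel estimate, one obtains, for $0\le\lambda<\|\rho\|^2$ and $\sigma(\lambda)=\sqrt{\|\rho\|^2-\lambda}$,
\[
G_\lambda^X(yK,\gamma yK) \lesssim (\text{polynomial})\cdot e^{-\langle\rho,H(\gamma)\rangle-\sigma(\lambda)\,d(yK,\gamma yK)}.
\]
Applying now the \emph{other} pinching inequality $\langle\rho,H\rangle\ge\rho_{\text{min}}\|H\|$ yields
\[
\sum_{\gamma\in\Gamma}G_\lambda^X(yK,\gamma yK) \lesssim \sum_{\gamma\in\Gamma} e^{-(\rho_{\text{min}}+\sigma(\lambda))\,d(yK,\gamma yK)},
\]
which, by definition of $\delta(\Gamma)$, converges as soon as $\rho_{\text{min}}+\sigma(\lambda)>\delta(\Gamma)$, i.e.\ $\lambda<\|\rho\|^2-(\delta(\Gamma)-\rho_{\text{min}})^2$. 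Taking the supremum over admissible $\lambda$ gives the stated lower bound.

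\textbf{Main obstacle.} The essential technical input is the sharp two-sided heat kernel (and hence Green function) asymptotics on the higher rank symmetric space $X$, uniform in all directions of $\overline{\mathfrak{a}^+}$ and in both regimes $t\to\infty$ and $d\to\infty$. These are delicate but available from the work of Anker, Ji and Ostellari. Granted them, the theorem's gap between upper and lower bounds arises entirely from the asymmetry $\rho_{\text{min}}\le\|\rho\|$: the upper bound uses $\langle\rho,H\rangle\le\|\rho\|\,\|H\|$ (applied to heat kernel \emph{lower} bounds), while the lower bound uses $\langle\rho,H\rangle\ge\rho_{\text{min}}\|H\|$ (applied to Green function \emph{upper} bounds); the gap is strict exactly when $\ell\ge2$ and $\rho$ is not aligned with an extremal direction of $\overline{\mathfrak{a}^+}$.
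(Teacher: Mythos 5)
The paper does not prove this theorem itself; it attributes it to Leuzinger and Weber, and your argument is a correct reconstruction of theirs: a heat-kernel lower bound for the upper estimate on $\lambda_0(Y)$ (Weber's route), and a Green-function Poincar\'e series for the lower estimate (Leuzinger's route). One small imprecision in the upper bound: since $\delta(\Gamma)$ is a $\limsup$, the bound $N_R\gtrsim e^{(\delta(\Gamma)-\epsilon)R}$ holds only along a subsequence $R_j\to+\infty$, not for all large $R$. This is harmless because $\lambda_0(Y)=-\lim_{t\to+\infty}\frac1t\log p_t^Y(yK,yK)$ is a genuine limit, so producing a lower bound for $p_{t_j}^Y$ along a matching subsequence $t_j\asymp R_j$ suffices; but the step should be phrased that way.

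The instructive comparison is with the paper's proof of Theorem~\ref{TheoremFirstImprovement}, which improves precisely the lower bound you prove. There the same Sullivan--Corlette Green-function criterion is used, but the lossy pinching $\langle\rho,H\rangle\ge\rho_{\text{min}}\|H\|$ --- the very source of $\rho_{\text{min}}$ and hence of the gap in Theorem~\ref{leuzinger} --- is avoided: one keeps $\langle\rho,H\rangle$ exactly and recognizes it as $\|\rho\|\,\dprime(xK,\gamma yK)$ for the polyhedral distance~\eqref{d'}, so the convergence condition becomes $\|\rho\|+\zeta>\deltaprime(\Gamma)$ instead of $\rho_{\text{min}}+\zeta>\delta(\Gamma)$. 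Your upper bound could also be obtained inside that single Green-function framework, by observing that the series~\eqref{SeriesGamma} is bounded below by $P_{\|\rho\|+\zeta+\epsilon}(xK,yK)$ (here the opposite pinching $\langle\rho,H\rangle\le\|\rho\|\,\|H\|$ enters) and therefore diverges whenever $\zeta<\delta(\Gamma)-\|\rho\|$; the heat-kernel argument is a legitimate alternative, at the cost of invoking the sharp long-time asymptotics of $p_t^X$ rather than only the Green function estimate.
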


\vspace{10pt}
In this paper, we first improve the lower bound of $\lambda_0(Y)$ in Theorem \ref{leuzinger} by a slight modification of the classical Poincar\'e series \eqref{PSeries}. Let $\deltaprime{(\Gamma)}$ denote
the critical exponent of the modified Poincaré series \eqref{PprimeSeries}
associated to the polyhedral distance \eqref{d'}.

\begin{theorem}\label{TheoremFirstImprovement}
The following lower bound holds for the bottom $\lambda_0(Y)$
of the $L^2$ spectrum of $-\Delta$ on $Y\!=\Gamma\backslash G/K$:
\begin{align*}
\lambda_0(Y)\ge
\begin{cases}
\,\|\rho\|^2
&\qquad\textnormal{if}\;\;0\le\deltaprime{(\Gamma)}\le\|\rho\|,\\
\,\|\rho\|^2-(\deltaprime{(\Gamma)}-\|\rho\|)^2
&\qquad\textnormal{if}\;\;\|\rho\|\le\deltaprime{(\Gamma)}\le2\hspace{.2mm}\|\rho\|.
\end{cases}
\end{align*}
\end{theorem}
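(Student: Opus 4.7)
The plan is to follow the Sullivan-Brooks-Corlette strategy, adapted to higher rank via sharp Green function estimates on $X$. The mechanism is that $\lambda_0(Y)\ge\lambda$ holds as soon as the $\Gamma$-periodization
\begin{equation*}
G_\lambda^Y(\bar x,\bar y) \;=\; \sum_{\gamma\in\Gamma}G_\lambda^X(x,\gamma y)
\end{equation*}
of the resolvent kernel of $-\Delta$ on $X$ converges off the diagonal, since the resulting positive Green function on $Y$ certifies via a positive superharmonic function that $-\Delta_Y-\lambda$ is nonnegative. The whole proof thus reduces to identifying for which values of $\lambda<\|\rho\|^2$ this periodization is summable and matching the corresponding threshold to the critical exponent $\delta'(\Gamma)$.

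For the summability I would invoke the sharp pointwise estimates of Anker-Ji and Anker-Ostellari. Writing $\lambda=\|\rho\|^2-\sigma^2$ with $\sigma\in(0,\|\rho\|]$, and using the Cartan decomposition $\gamma=k_1(\exp H(\gamma))k_2$ with $H(\gamma)\in\overline{\mathfrak{a}^+}$, these estimates give, away from the diagonal,
\begin{equation*}
G_\lambda^X(eK,\gamma K)\;\asymp\;\pi(H(\gamma))\,e^{-\langle\rho,H(\gamma)\rangle-\sigma\|H(\gamma)\|},
\end{equation*}
where $\pi$ is a polynomial prefactor of Plancherel type. The exponent splits into an intrinsic piece $\langle\rho,H(\gamma)\rangle$ governed by the position of $H(\gamma)$ inside the chamber, and a spectral piece $\sigma\|H(\gamma)\|$. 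The Leuzinger-Weber bound arises from replacing $\langle\rho,H\rangle$ by $\rho_{\text{min}}\|H\|$, which collapses both pieces onto the ordinary Riemannian distance and loses the gap $\|\rho\|-\rho_{\text{min}}$ whenever $H(\gamma)$ sits near a wall of $\overline{\mathfrak{a}^+}$.

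To avoid this loss, I would use the polyhedral distance $d'$ precisely as a device that repackages the full exponent $\langle\rho,H\rangle+\sigma\|H\|$ into a single geometric quantity on $X$. Concretely, $d'$ should be arranged so that, for $s=\|\rho\|+\sigma$,
\begin{equation*}
s\,d'(eK,\gamma K) \;\ge\; \langle\rho,H(\gamma)\rangle+(s-\|\rho\|)\,\|H(\gamma)\|,
\end{equation*}
up to an error absorbable in $\pi$. Once this is in place, the periodized Green function is dominated by the modified Poincar\'e series at exponent $s$, which converges as soon as $s>\delta'(\Gamma)$. Translating via $\lambda=\|\rho\|^2-(s-\|\rho\|)^2$ and letting $s$ approach either $\delta'(\Gamma)$ or $\|\rho\|$ from above, according to the regime, yields the two claimed lower bounds.

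The main technical obstacle I anticipate is the behavior of the Anker-Ji estimates on the walls of $\overline{\mathfrak{a}^+}$: the asymptotic formula is sharpest deep inside the chamber, while the Plancherel prefactor degenerates on the walls and picks up additional algebraic corrections depending on the face containing $H(\gamma)$. Making the summation over $\Gamma$ rigorous requires either promoting the bound to a uniform upper estimate valid on all of $\overline{\mathfrak{a}^+}$, paying for it with an arbitrarily small exponential margin absorbable by replacing $s$ with $s+\epsilon$ and later sending $\epsilon\downarrow0$, or localizing by integrating $G_\lambda^X$ against a bump around each orbit point and exploiting uniform local integrability. It is precisely at this step that the polyhedral distance must be defined with enough slack that boundary contributions are absorbed and $\delta'(\Gamma)$ ends up sharply governing the convergence.
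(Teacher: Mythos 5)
Your approach coincides with the paper's: periodize the Green function $g_\zeta$ of $-\Delta-\|\rho\|^2+\zeta^2$ over $\Gamma$, use the two-sided Anker--Ji estimate $g_\zeta(\exp H)\asymp(\text{algebraic prefactor})\,e^{-\langle\rho,H\rangle-\zeta\|H\|}$, dominate the resulting series by the polyhedral Poincar\'e series $\Pprime_{\|\rho\|+\zeta}$, and invoke Corlette's characterization of $\lambda_0(Y)$ as $\sup\{\|\rho\|^2-\zeta^2\}$ over those $\zeta$ for which the periodized Green function converges. Two small points: with $s=\|\rho\|+\sigma$ and $\dprime(eK,\gamma K)=\langle\rho,H(\gamma)\rangle/\|\rho\|$, the inequality you want is $s\,\dprime\le\langle\rho,H\rangle+\sigma\|H\|$ (Cauchy--Schwarz), not $\ge$, so that $e^{-\langle\rho,H\rangle-\sigma\|H\|}\le e^{-s\dprime}$ and the Green series is dominated by $\Pprime_s$. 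And the technical obstacle you anticipate about wall degeneration does not arise: the Anker--Ji bound is a genuine two-sided estimate valid uniformly on $\overline{\mathfrak{a}^+}$ for $\|H\|\ge\tfrac12$, and its prefactor $\prod_{\alpha\in\Sigma_{\mathrm{red}}^+}(1+\langle\alpha,H\rangle)\cdot\|H\|^{-\frac{\ell-1}{2}-|\Sigma_{\mathrm{red}}^+|}$ is bounded above by a constant, so no $\epsilon$-slack is required for the upper comparison (only for the matching lower comparison against $P_{\|\rho\|+\zeta+\epsilon}$, which the theorem itself does not need).
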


We obtain next a plain analog of Theorem \ref{corlette} by considering a more involved family of Poincar\'e series. We denote by $\deltasecond{(\Gamma)}$ the critical exponent of 
$\Psecond_s(xK, yK)$, see \eqref{PsecondSeries}.

\begin{theorem}\label{second improvement}
The following characterization holds for the bottom $\lambda_0(Y)$
of the $L^2$ spectrum of $-\Delta$ on $Y\!=\Gamma\backslash G/K$:
\begin{align}\label{higher rank charac}
\lambda_0(Y)=\begin{cases}
\,\|\rho\|^2
&\qquad\textnormal{if}\;\;0\le\deltasecond{(\Gamma)}\le\|\rho\|,\\
\,\|\rho\|^2- (\deltasecond{(\Gamma)}-\|\rho\|)^2
&\qquad\textnormal{if}\;\;\|\rho\|\le\deltasecond{(\Gamma)}\le 2\hspace{.2mm}\|\rho\|.
\end{cases}
\end{align}
\end{theorem}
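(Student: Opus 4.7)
The plan is to apply the Sullivan-Brooks Green-function characterization of the bottom of the spectrum to the covering $X \to Y$, which relates $\lambda_0(Y)$ to the summability over $\Gamma$ of the Green kernel of $-\Delta_X - \mu$. Writing $\mu_s := \|\rho\|^2 - (s-\|\rho\|)^2$, the target identity reads $\lambda_0(Y) = \mu_{\deltasecond(\Gamma)}$. The crux is to arrange $\Psecond_s$ so that each summand $\Psecond_s(xK,\gamma yK)$ is comparable, up to uniform positive multiplicative constants, to $G_{\mu_s}^X(xK,\gamma yK) := ((-\Delta_X - \mu_s)^{-1}\delta_{\gamma yK})(xK)$. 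For this I would invoke the sharp Anker-Ji/Anker-Ostellari heat kernel bounds \cite{AnJi1999,AnOs2003}, which, integrated against $e^{\mu_s t}\,dt$, produce an explicit pointwise equivalent of $G_{\mu_s}^X(\exp H)$ in the Cartan variable $H \in \overline{\mathfrak{a}^+}$: a product of a radial polynomial prefactor, polynomial factors involving $\langle\alpha,H\rangle$ for $\alpha\in\Sigma^+$, and the exponential $e^{-\langle\rho,H\rangle - (s-\|\rho\|)\|H\|}$.

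Given this comparison, both halves of \eqref{higher rank charac} follow from the same criterion applied in opposite directions. For $s > \deltasecond(\Gamma)$, convergence of $\Psecond_s(xK,yK)$ transfers to convergence of $\sum_{\gamma \in \Gamma} G_{\mu_s}^X(xK,\gamma yK)$, whose sum represents the Green function of $-\Delta_Y - \mu_s$ at $(\pi(x),\pi(y))$; this forces $\mu_s \leq \lambda_0(Y)$, and letting $s \downarrow \deltasecond(\Gamma)$ yields the stated lower bound (combined with the trivial $\lambda_0(Y)\le\|\rho\|^2$ when $\deltasecond(\Gamma)\le\|\rho\|$). In the regime $\deltasecond(\Gamma)\ge\|\rho\|$, divergence of $\Psecond_s$ for $\|\rho\| < s < \deltasecond(\Gamma)$ rules out a finite positive Green kernel on $Y$ for $-\Delta_Y - \mu_s$, forcing $\mu_s \geq \lambda_0(Y)$; letting $s\uparrow\deltasecond(\Gamma)$ gives the matching upper bound.

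The main obstacle is the first step: producing an explicit, tractable kernel that matches $G_{\mu_s}^X$ uniformly across the entire closed Weyl chamber, walls included, with the correct polynomial prefactors attached to each positive root. The Riemannian distance used in $P_s$ ignores direction, and the polyhedral distance used in $P'_s$ captures only the piecewise-linear leading term but not the polynomial corrections; this loss is precisely why \Cref{leuzinger,TheoremFirstImprovement} are not sharp. Once the correctly weighted $\Psecond_s$ is in place, the Brooks-type spectral comparison is standard and symmetric in the two inequalities.
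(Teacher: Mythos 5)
Your overall strategy is exactly the paper's: apply the Corlette--Sullivan Green-function characterization of $\lambda_0(Y)$ to the covering $X\to Y$, compare $\sum_{\gamma}g_\zeta(Ky^{-1}\gamma^{-1}xK)$ to the Poincar\'e series $\Psecond_s$, and read off the critical $\zeta$. However, the step you flag as the ``crux'' is not how the argument goes, and the ``main obstacle'' you identify is not an obstacle at all.

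You propose to arrange matters so that each summand $e^{-\dsecond_s(xK,\gamma yK)}$ is \emph{pointwise comparable}, up to uniform constants, to $g_{\zeta}$, and you correctly observe that a pure exponential can never match the polynomial prefactors
$\prod_{\alpha\in\Sigma_{\text{red}}^+}\bigl(1+\langle\alpha,H\rangle\bigr)\,\|H\|^{-\frac{\ell-1}{2}-|\Sigma_{\text{red}}^+|}$
appearing in \eqref{GreenFunction}. But pointwise comparability is not needed, and the paper's $\Psecond_s$ carries no polynomial corrections. The key observation you miss is that these polynomial factors are subexponential, so they vanish at the level of critical exponents: one the one hand $\prod_\alpha(1+\langle\alpha,H\rangle)\,\|H\|^{-\frac{\ell-1}{2}-|\Sigma_{\text{red}}^+|}\lesssim1$ (since $\langle\alpha,H\rangle\lesssim\|H\|$), so the Green summand is $\lesssim e^{-\dsecond_{\|\rho\|+\zeta}}$; on the other hand, for every $\epsilon>0$ the decaying power of $\|H\|$ is $\gtrsim e^{-\epsilon\|H\|}$, so the Green summand is $\gtrsim e^{-\dsecond_{\|\rho\|+\zeta+\epsilon}}$. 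This two-sided $\epsilon$-squeeze — and not an exact pointwise match — pins down the critical $\zeta$ at $\deltasecond(\Gamma)-\|\rho\|$, letting $\epsilon\searrow0$ at the end. Your diagnosis that the non-sharpness of Theorems \ref{leuzinger} and \ref{TheoremFirstImprovement} is due to ``missing polynomial corrections'' is also off: the gap there is purely in the exponential rate (the mismatch between $d$, which sees $\|H\|$, and $\dprime$, which sees $\langle\rho,H\rangle/\|\rho\|$), and the role of $\dsecond_s$ is to combine $\dprime$ at level $\|\rho\|$ with $d$ beyond, so that the exponent $\|\rho\|\dprime+\zeta d$ matches the $e^{-\langle\rho,H\rangle-\zeta\|H\|}$ in \eqref{GreenFunction} exactly. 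Once you replace ``pointwise comparability'' by the $\epsilon$-squeeze, your plan is complete.
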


\begin{remark}
If $\Gamma$ is a lattice, i.e., $Y\!=\Gamma\backslash G/K$ has finite volume,
then $\lambda_0(Y)=0$ and $\deltasecond(\Gamma)=2\hspace{.2mm}\|\rho\|$, hence $\deltaprime(\Gamma)=2\hspace{.2mm}\|\rho\|$.
Furthermore $\delta(\Gamma)=2\hspace{.2mm}\|\rho\|$ \cite[Theorem~7.4]{Alb1999}.
As pointed out by Corlette \cite{Cor1990} in rank one and by Leuzinger \cite{Leu2003} in higher rank,
if $G$ has Kazhdan's property (T), then the following conditions are actually equivalent:
\vspace{1mm}

\centerline{
(a) $\Gamma$ is a lattice,\quad
(b) $\lambda_0(Y)=0$,\quad
(c) $\delta(\Gamma)=2\hspace{.2mm}\|\rho\|$,\quad
(d) $\deltasecond(\Gamma)=2\hspace{.2mm}\|\rho\|$.
}
\end{remark}

\begin{remark}
As for the Green function, the heat kernel
\begin{align*}
h_t^Y(\Gamma xK,\Gamma yK)=\sum\nolimits_{\gamma\in\Gamma}h_t{(Ky^{-1}\gamma^{-1}xK)}
\end{align*}
on a locally symmetric space $Y\!=\Gamma\backslash G/K$ can be expressed and estimated by using the heat kernel $h_t$ on the symmetric space $X =G/K$, whose behavior is well understood \cite{AnJi1999,AnOs2003}.
By adapting straightforwardly the methods carried out in \cite{DaMa1988,Web2008}, and by applying Theorem \ref{second improvement} instead of Theorem \ref{corlette} and Theorem \ref{leuzinger}, we refine the Gaussian bounds of $h_{t}^{Y}$ and get rid in particular of 
$\rho_{\text{\rm min}}$.
The following estimates hold for all {$t>0$} and all $x,y\in G$:\\
\noindent{\rm (i)}
Assume that $\deltasecond{(\Gamma)}<\|\rho\|$
and let $\deltasecond{(\Gamma)}<s<\|\rho\|$.
Then 
\begin{align*}
h_t^Y( \Gamma xK,\Gamma yK)
\lesssim t^{-\frac n2}\,(1+t)^{\frac{n-D}2}\,e^{-\|\rho\|^2t}\,
e^{-\frac{d (\Gamma xK,\Gamma yK)^2}{4t}}\,\Psecond_s(xK, yK).
\end{align*}

\noindent{\rm (ii)}
Assume that $\|\rho\|\le\deltasecond{(\Gamma)}<2\hspace{.2mm}\|\rho\|$
and let $\deltasecond{(\Gamma)}-\|\rho\|<s_1<s_2<\|\rho\|$.
Then
\begin{align*}
h_t^Y(\Gamma xK,\Gamma yK)
\lesssim t^{-\frac n2}\,e^{-\left(\|\rho\|^2-s_2^2\right)t}\,\Psecond_{\|\rho\|+s_1}(xK,yK).
\end{align*}

\noindent{\rm (iii)}
Assume that $\deltasecond{(\Gamma)}<2\hspace{.2mm}\|\rho\|$.
Let $s>\deltasecond{(\Gamma)}$ and $\epsilon>0$. Then
\begin{align*}
h_t^Y( \Gamma xK,\Gamma yK)
\lesssim t^{-\frac n2}\,
e^{-\left({\|\rho\|^2-(\deltasecond(\Gamma)-\|\rho\|)^2}-2\epsilon\right)t}\,
e^{-\frac{d(\Gamma xK,\Gamma yK)^2}{4(1+\epsilon)t}}\,\Psecond_s(xK, xK)^{\frac{1}{2}}\Psecond_s(yK, yK)^{\frac{1}{2}}.
\end{align*}
\end{remark}
\vspace{10pt}
\section{First improvement}

In this section, we replace the Riemannian distance $d$ on $X$ by a \textit{polyhedral} distance
\begin{align}\label{d'}
\textstyle
\dprime(xK,yK)=\langle\frac{\rho}{\|\rho\|},(y^{-1}x)^+\rangle
\qquad\forall\,x,y\in G,
\end{align}
where $(y^{-1}x)^{+}$ denotes the $\overline{\mathfrak{a}^+}$-component of $y^{-1}x$ in the Cartan decomposition $G=K(\exp\overline{\mathfrak{a}^+})K$.
The corresponding balls, which reflect the volume growth of $X$ at infinity, played an important role in \cite{Ank1990} and \cite{Ank1991}. More general polyhedral sets were considered in \cite{Ank1992} and \cite{AAS2010}.

\begin{proposition}
$\dprime$ is a $G$-invariant distance on $X$.
\end{proposition}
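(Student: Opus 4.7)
The plan is to verify well-definedness on the quotient $G/K$, $G$-invariance, and the four distance axioms: non-negativity, positive definiteness, symmetry, and triangle inequality. Well-definedness and $G$-invariance are both immediate from the $K$-bi-invariance $(k_1 g k_2)^+=g^+$ of the Cartan projection: changing representatives replaces $y^{-1}x$ by $k_2^{-1}y^{-1}xk_1$ with $k_1,k_2\in K$, which has the same $(\cdot)^+$, and $((gy)^{-1}(gx))^+=(y^{-1}x)^+$ is obvious.

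For positive definiteness, since every positive root is a nonnegative integer combination of simple roots in which the simple root itself already appears, $\rho=\tfrac12\sum_{\alpha\in\Sigma^+}m_\alpha\alpha$ expands as a strictly positive combination of simple roots; hence $\rho$ lies in the interior of the cone dual to $\overline{\mathfrak{a}^+}$, and consequently $\langle\rho,H\rangle>0$ for every $H\in\overline{\mathfrak{a}^+}\setminus\{0\}$. Thus $d'(xK,yK)=0$ forces $(y^{-1}x)^+=0$, whence $y^{-1}x\in K$ and $xK=yK$. Symmetry follows from the identities $(g^{-1})^+=-w_0\,g^+$ (with $w_0$ the longest Weyl element, obtained by inverting a Cartan decomposition $g=k_1 e^{g^+}k_2$ and conjugating $e^{-g^+}$ by a representative of $w_0$ in $N_K(\mathfrak{a})$, using $-w_0\overline{\mathfrak{a}^+}=\overline{\mathfrak{a}^+}$) and $w_0\rho=-\rho$ (since $w_0$ bijects $\Sigma^+\leftrightarrow-\Sigma^+$ preserving multiplicities); together they give $\langle\rho,(g^{-1})^+\rangle=-\langle w_0\rho,g^+\rangle=\langle\rho,g^+\rangle$.

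The main obstacle is the triangle inequality, which via the substitution $g_1=z^{-1}y$, $g_2=y^{-1}x$ reduces to the subadditivity
\[
\langle\rho,(g_1 g_2)^+\rangle\;\le\;\langle\rho,g_1^+\rangle+\langle\rho,g_2^+\rangle
\qquad\forall\,g_1,g_2\in G.
\]
I would establish this via the Iwasawa decomposition $G=KAN$ and Kostant's convexity theorem. Writing $g=\kappa(g)\,e^{A(g)}\nu(g)$, a direct calculation yields the cocycle $A(g_1 g_2)=A(g_1\kappa(g_2))+A(g_2)$. Kostant's theorem asserts $A(g)\in\co(W\cdot g^+)$, and since $\rho$ is strictly dominant, $\max_{w\in W}\langle\rho,wg^+\rangle=\langle\rho,g^+\rangle$, so $\langle\rho,A(g)\rangle\le\langle\rho,g^+\rangle$. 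Choosing $k=k_2^{-1}$ in a Cartan decomposition $g=k_1 e^{g^+}k_2$ yields $gk_2^{-1}=k_1 e^{g^+}\in KA$, hence $A(gk_2^{-1})=g^+$, which gives the key formula $\langle\rho,g^+\rangle=\max_{k\in K}\langle\rho,A(gk)\rangle$. Applying the cocycle to $g_1 g_2 k$ and taking the maximum over $k\in K$ then produces
\[
\langle\rho,(g_1 g_2)^+\rangle=\max_{k\in K}\bigl\{\langle\rho,A(g_1\kappa(g_2 k))\rangle+\langle\rho,A(g_2 k)\rangle\bigr\}\le\langle\rho,g_1^+\rangle+\langle\rho,g_2^+\rangle,
\]
which is the required subadditivity, and hence the triangle inequality.
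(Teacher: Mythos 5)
Your proof is correct, and for the triangle inequality it takes a genuinely different route from the paper. The paper reduces subadditivity of $\langle\rho,\cdot\,\rangle\circ(\,\cdot\,)^+$ to the convex hull inclusion $\co[W.(xy)^+]\subset\co[W.(x^++y^+)]$ (Lemma \ref{distance lemma}), which it proves representation-theoretically: it suffices to check $\langle\lambda,(xy)^+\rangle\le\langle\lambda,x^+\rangle+\langle\lambda,y^+\rangle$ for the highest weights $\lambda$ of irreducible spherical representations, and this follows from Weyl's unitary trick together with submultiplicativity of the operator norm, since $\|\pi(g)\|=e^{\langle\lambda,g^+\rangle}$. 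You instead use the Iwasawa middle projection $A(\cdot)$: the cocycle identity $A(g_1g_2)=A(g_1\kappa(g_2))+A(g_2)$, Kostant's convexity theorem $A(g)\in\co(W.g^+)$, and the characterization $\langle\rho,g^+\rangle=\max_{k\in K}\langle\rho,A(gk)\rangle$. Both are classical paths to the subadditivity of the Cartan projection. The paper's argument is essentially self-contained modulo standard highest-weight theory and yields the stronger convex-hull inclusion for all dominant $\lambda$ in one stroke; yours invokes Kostant's theorem as a black box but is slightly shorter, and in fact also extends verbatim from $\rho$ to any dominant $\lambda$ (dominance alone, not strict dominance, is what makes $\max_{w\in W}\langle\lambda,wH\rangle=\langle\lambda,H\rangle$ for $H\in\overline{\mathfrak{a}^+}$). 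Your explicit verification of positive definiteness, using that $\rho$ lies in the interior of the dual cone, is a detail the paper leaves implicit and is a welcome addition.
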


\begin{proof}
Notice first that \eqref{d'} descends from $G\times{G}$ to $X\times{X}$, as the map $z\mapsto{z^{+}}$ is $K$-bi-invariant on ${G}$.
The $G$-invariance of $\dprime$ is straightforward from the definition \eqref{d'}. The symmetry
\begin{align*}
\dprime(xK,yK)=\dprime(yK,xK)
\end{align*}
follows from
\begin{align}\label{distance properties}
(y^{-1})^+=-w_0. y^+
\quad\text{and}\quad
-w_{0}.\rho=\rho,
\end{align}
where $w_0$ denotes the longest element in the Weyl group. Let us check the triangular inequality
\begin{align*}
\dprime(xK,yK)\le\dprime(xK,zK)+\dprime(zK,yK). 
\end{align*}
By $G$-invariance, we may reduce to the case where $zK = eK$.
According to Lemma \ref{distance lemma} below,
\begin{align*}
x^{+} + (y^{-1})^{+} - (y^{-1}x)^{+}
\end{align*}
belongs to the cone generated by the positive roots.
\end{proof}

\begin{lemma}\label{distance lemma}
For every $x,y \in G$, we have the following inclusion
\begin{align}\label{co}
\co[W.(xy)^+]\subset\co[W.(x^++ y^+)]
\end{align}
between convex hulls.
\end{lemma}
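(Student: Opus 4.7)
The inclusion \eqref{co} is equivalent, by $W$-invariance and convexity of the right-hand side, to the subadditivity of the Cartan projection: $(xy)^+\in\co[W\msb.(x^++y^+)]$. The plan is to establish it by combining Kostant's Iwasawa convexity theorem with a Minkowski subadditivity property of $W$-orbit hulls. Writing $G=KAN$ and $g=\kappa(g)\exp(H(g))n(g)$ for $g\in G$, a short computation yields the cocycle identity $H(g_1g_2)=H(g_1\msf\kappa(g_2))+H(g_2)$, while Kostant's theorem asserts that $\lbrace H(gk):k\in K\rbrace=\co[W\msb.g^+]$ for every $g\in G$. Applying the cocycle to $xyk$ with $k\in K$ and Kostant's theorem to $x$ and to $y$,
\[
H(xyk)=H(x\msf\kappa(yk))+H(yk)\in\co[W\msb.x^+]+\co[W\msb.y^+].
\]
Taking the union over $k\in K$ and using Kostant's theorem once more for $g=xy$ produces
\[
\co[W\msb.(xy)^+]\subset\co[W\msb.x^+]+\co[W\msb.y^+].
\]

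The remaining step is the Minkowski subadditivity $\co[W\msb.x^+]+\co[W\msb.y^+]\subset\co[W\msb.(x^++y^+)]$. Using the standard identity $\co(A)+\co(B)=\co(A+B)$, it suffices to check that $w\msb.x^++v\msb.y^+\in\co[W\msb.(x^++y^+)]$ for every $w,v\in W$, and applying $w^{-1}$ (which preserves the $W$-invariant right-hand side) reduces this to $x^++u\msb.y^+\in\co[W\msb.(x^++y^+)]$ for every $u\in W$. The latter follows from the supporting-hyperplane characterization of the convex hull: for every dominant $\lambda\in\overline{\mathfrak{a}^+}$ and every $w\in W$,
\[
\langle w\lambda,x^++u\msb.y^+\rangle=\langle w\lambda,x^+\rangle+\langle u^{-1}w\lambda,y^+\rangle\le\langle\lambda,x^++y^+\rangle,
\]
since $\langle w'\lambda,\mu\rangle\le\langle\lambda,\mu\rangle$ for any dominant $\mu$ and any $w'\in W$ (the maximum of $\varphi\mapsto\langle\varphi,\mu\rangle$ over the $W$-orbit of a dominant $\lambda$ being attained at $\lambda$ itself).

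The main technical care is in the first step, where Kostant's theorem must be applied on the correct, right-hand side of the Iwasawa decomposition: left $K$-multiplication leaves $H(\cdot)$ invariant and would produce no useful information. Once this is handled, the remaining combinatorial step on $W$-orbit hulls is routine.
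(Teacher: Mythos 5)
Your proof is correct, but it follows a genuinely different route from the paper. The paper reduces \eqref{co} to the scalar inequality $\langle\lambda,(xy)^+\rangle\le\langle\lambda,x^+\rangle+\langle\lambda,y^+\rangle$ for dominant $\lambda$, and then proves this directly for highest weights of spherical representations via Weyl's unitary trick and submultiplicativity of the operator norm: $e^{\langle\lambda,g^+\rangle}=\|\pi(g)\|$. Your argument instead stays entirely on the Lie-group side, combining the cocycle identity $H(g_1g_2)=H(g_1\kappa(g_2))+H(g_2)$ for the Iwasawa projection with Kostant's convexity theorem $\{H(gk):k\in K\}=\co[W.\ssf g^+]$, which yields $\co[W.(xy)^+]\subset\co[W.\ssf x^+]+\co[W.\ssf y^+]$, and then closes with the elementary Minkowski subadditivity $\co[W.\ssf x^+]+\co[W.\ssf y^+]\subset\co[W.(x^++y^+)]$ via the supporting-hyperplane characterization and the fact that $\langle w\lambda,\mu\rangle\le\langle\lambda,\mu\rangle$ for dominant $\lambda,\mu$. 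Both are standard and rigorous; the paper's proof is shorter and self-contained modulo basic representation theory, while yours replaces representation theory with Kostant's convexity theorem as the key input and makes the role of the Weyl-orbit geometry more transparent. Your final remark about needing Kostant on the correct (right) side of the Iwasawa decomposition is apt — left $K$-invariance of $H$ is precisely what makes the cocycle useful here.
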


\begin{proof}
The inclusion \eqref{co} amounts to the fact that
\begin{align*}
x^{+} + y^{+} - (xy)^{+}
\end{align*}
belongs to the cone generated by the positive roots or, equivalently, to the inequality
\begin{align}\label{co2}
\langle\lambda,(xy)^+\rangle\le\langle\lambda,x^+\rangle+\langle\lambda,y^+\rangle
\qquad\forall\,\lambda\in\overline{\mathfrak{a}^{+}}.
\end{align}
It is enough to prove \eqref{co2} for all highest weights $\lambda$ of irreducible finite-dimensional complex representations $\pi:G\longrightarrow\text{GL}(V)$ with $K$-fixed vectors. 
According to Weyl’s unitary trick (see for instance \cite[Proposition 7.15]{Kna2002}), there exists an inner product on $V$ such that
\begin{align*}
\textstyle
\begin{cases}
\pi(k)\text{ is unitary}&{\forall}\,k\in K,\\
\pi(a)\text{ is self-adjoint}&{\forall}\,a\in\exp\mathfrak{a}.
\end{cases}
\end{align*}
As $\lambda$ is the highest weight of $\pi$, then
\begin{align*}
e^{\langle \lambda, (xy)^{+} \rangle} = \| \pi(xy) \|
\le \| \pi(x) \| \| \pi(y) \| = 
e^{\langle \lambda, x^{+} \rangle} e^{\langle \lambda, y^{+} \rangle}
= e^{\langle \lambda, x^{+} + y^{+} \rangle}.
\end{align*}
\end{proof}

\begin{remark}
The distance $\dprime$ is comparable to the Riemannian distance $d$. Specifically,
\begin{align}\label{d and d'}
\textstyle
\frac{\rho_{\text{min}}}{\|\rho\|}d(xK,yK)\le\dprime(xK,yK)\le d(xK,yK)
\qquad\forall\,x,y\in G.
\end{align}
This follows indeed from
\begin{align*}
\textstyle
\frac{\rho_{\text{min}}}{\|\rho\|}\|H\|\le\langle\frac{\rho}{\|\rho\|},H\rangle\le\|H\|
\qquad\forall\,H\in\overline{\mathfrak{a}^+}.
\end{align*}
\end{remark}

\vspace{10pt}
The volume of balls
\begin{align*}
\Bprime_r(xK)=\lbrace yK\in X\,|\,\dprime(yK,xK)\le r\rbrace
\end{align*}
was determined in \cite[Lemma 6]{Ank1990}.
For the reader's convenience, we recall the statement and its proof.

\begin{lemma}\label{ball vol}
For every $x\in G$ and $r>0$, we have
\footnote{The symbol $f\asymp g$ between two non-negative expressions
means that there exist constants $0<A\le B<+\infty$ such that $Ag\le f\le Bg$.}
\begin{align*}
|\Bprime_r(xK)|\asymp
\begin{cases}
\,r^{n}
&\qquad\textnormal{if}\;\;0<r<1,\\
\,r^{\ell-1}e^{2\hspace{.2mm}\|\rho\|r}
&\qquad\textnormal{if}\;\;r\ge1.
\end{cases}
\end{align*}
\end{lemma}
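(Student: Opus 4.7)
The plan is to use the $G$-invariance of $d'$ to reduce to the case $x=e$, and then compute $|B'_r(eK)|$ by polar integration. Since $d'(eK,yK)$ depends only on the $\overline{\mathfrak{a}^+}$-component of $y$ in the Cartan decomposition $G=K(\exp\overline{\mathfrak{a}^+})K$, the ball $B'_r(eK)$ is $K$-invariant in $X$, and the standard integration formula on $X$ gives
\begin{equation*}
|B'_r(eK)| \,\asymp\, \int_{\{H\in\overline{\mathfrak{a}^+}\,:\,\langle\rho/\|\rho\|,\,H\rangle\,\le\,r\}} \delta(H)\,dH,
\qquad \delta(H) = \prod_{\alpha\in\Sigma^+}(\sinh\alpha(H))^{m_\alpha}.
\end{equation*}
Everything then reduces to an explicit Euclidean computation on $\mathfrak{a}$.

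For the short range $0<r<1$, I would use the well-known expansion $\delta(H)\asymp\prod_{\alpha\in\Sigma^+}\alpha(H)^{m_\alpha}$ valid near the origin and rescale $H=rH'$: the domain becomes the fixed truncated cone $\{H'\in\overline{\mathfrak{a}^+}:\langle\rho/\|\rho\|,H'\rangle\le 1\}$, while the homogeneity of the integrand combined with the Lebesgue change of variables contributes a factor $r^{\ell+\sum_\alpha m_\alpha}=r^n$, since $\dim X=\ell+\sum_{\alpha\in\Sigma^+}m_\alpha$.

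For $r\ge 1$, I would decompose $H=t\,\rho/\|\rho\|+H^{\perp}$ with $H^{\perp}\perp\rho$, so that $2\rho(H)=2\|\rho\|\,t$ and the polyhedral constraint reads $0\le t\le r$. For the upper bound, $\sinh s\le\tfrac12 e^s$ yields $\delta(H)\lesssim e^{2\rho(H)}$, and since $\overline{\mathfrak{a}^+}$ is a cone with apex at the origin its cross-section at height $t$ has $(\ell-1)$-volume of order $t^{\ell-1}$. Fubini then reduces everything to
\begin{equation*}
\int_0^r t^{\ell-1}\,e^{2\|\rho\|\,t}\,dt \,\asymp\, r^{\ell-1}\,e^{2\|\rho\|\,r},
\end{equation*}
which follows by standard Laplace-type asymptotics. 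For the matching lower bound, I would restrict the integration to an interior sub-cone such as $\{H\in\overline{\mathfrak{a}^+}:\alpha(H)\ge 1 \text{ for all } \alpha\in\Sigma^+\}$, on which $\sinh\alpha(H)\asymp e^{\alpha(H)}$ and hence $\delta(H)\asymp e^{2\rho(H)}$; its cross-sections still have $(\ell-1)$-volume of order $t^{\ell-1}$ for $t$ large, so the same asymptotic yields the lower bound.

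The only real obstacle is the technical nuisance that the clean identification $\delta(H)\asymp e^{2\rho(H)}$ breaks down near the walls of the Weyl chamber, where the polynomial $\sinh$-factors degenerate. This is what forces the lower bound in the second regime to be localised on an interior sub-cone rather than on the whole chamber; but since such a sub-cone still has cross-sections of the correct order $t^{\ell-1}$, the final estimate is unaffected.
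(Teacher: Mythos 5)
Your proposal is correct and follows essentially the same route as the paper: reduce to $x=e$ by invariance, integrate the Cartan density over the polyhedral slab $\{\langle\rho,H\rangle\le\|\rho\|r\}$, use the polynomial approximation of the density for small $r$ (your explicit rescaling argument is just a slightly more detailed way of saying the paper's ``$\asymp r^n$''), bound $\omega(H)\lesssim e^{2\langle\rho,H\rangle}$ for the upper bound at large $r$, and restrict to a region bounded away from the chamber walls for the matching lower bound. The only cosmetic difference is that the paper uses a translated chamber $H_0+\overline{\mathfrak{a}^+}$ for the lower bound where you use the (not actually a cone, but equivalent) region $\{\alpha(H)\ge1\ \forall\alpha\in\Sigma^+\}$; both sets serve the same purpose and give cross-sections of $(\ell-1)$-volume $\asymp t^{\ell-1}$.
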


\begin{remark}
Notice the different large scale behavior,
in comparison with the classical ball volume
\begin{align*}
|B_r(xK)|\asymp
\begin{cases}
\,r^{n}
&\qquad\textnormal{if}\;\;0<r<1,\\
\,r^{\frac{\ell-1}2}e^{2\hspace{.2mm}\|\rho\|r}
&\qquad\textnormal{if}\;\;r\ge1,
\end{cases}
\end{align*}
see for instance \cite{Str1981} or \cite{Kni1997}.
\end{remark}

\begin{proof}
By translation invariance, we may assume that $x=e$.
Recall the integration formula
\begin{align}\label{IntegrationFormula}
\int_{X}dx\,f(x)=\const\int_{K} dk \ 
\int_{\mathfrak{a}^+}dH\,\omega(H)\,f(k(\exp H)K),
\end{align}
in the Cartan decomposition, with density
\begin{align*}
\omega(H)
=\prod_{\alpha\in\Sigma^+}(\sinh\langle\alpha,H\rangle )^{m_\alpha}
\asymp\prod_{\alpha\in\Sigma^+}\!
\left(\tfrac{\langle\alpha,H\rangle}{1+\langle\alpha,H\rangle}\right)^{m_\alpha}
e^{2\langle\rho,H\rangle},
\end{align*}
since $\sinh{t}\sim\tfrac{t}{1+t}e^{t}$ for all $t\ge0$.
Thus
\begin{align*}
|\Bprime_r(eK)|
\,=\,\const\int_{\lbrace H\in\mathfrak{a}^+|\langle\rho,H\rangle\le\|\rho\|r\rbrace}dH\,\omega(H)
\,\asymp\,\int_{\lbrace H\in\mathfrak{a}^+|\|H\|\le r\rbrace}dH\,
\prod_{\alpha\in\Sigma^+}\langle\alpha,H\rangle^{m_\alpha}
\asymp r^n
\end{align*}
if $r$ is small.
Let us turn to $r$ large. On the one hand, we estimate from above
\begin{align*}
| \Bprime_r(eK) |\lesssim
\int_{\lbrace H\in\mathfrak{a}^+|\langle\rho,H\rangle\le\|\rho\|r\rbrace}dH\,e^{2\langle\rho,H\rangle}
\asymp \int_0^{2\hspace{.2mm}\|\rho\|r}ds\,s^{\ell-1 }e^s
\asymp r^{\ell -1} e^{2 \| \rho \| r}.
\end{align*}
On the other hand, let $H_0\in\mathfrak{a}^+$. As
\begin{align*}
\omega(H)\asymp e^{2\langle\rho,H\rangle}
\qquad\forall\,H\in H_0+\overline{\mathfrak{a}^+},
\end{align*}
we estimate from below
\begin{align*}
|\Bprime_r(eK) |\gtrsim
\int_{\lbrace H\in H_0+\mathfrak{a}^+|\langle\rho,H\rangle\le\|\rho\|r\rbrace}dH\,e^{2 \langle\rho,H\rangle}
\gtrsim\int_{C_0}^{2\hspace{.2mm}\|\rho\|r}ds\,s^{\ell-1 }e^s
\asymp r^{\ell-1} e^{2\hspace{.2mm}\|\rho\|r},
\end{align*}
where $C_{0} > 0$ is a constant depending on $H_0$.
\end{proof}

Consider now the modified Poincar\'e series
\begin{align}\label{PprimeSeries}
\Pprime_s(xK,yK)=\sum\nolimits_{\gamma\in\Gamma}e^{-s\dprime(xK,\gamma yK)}
\qquad\forall\,s>0,\,\forall\,x,y \in G
\end{align}
associated with $\dprime$, its critical exponent
\begin{align}\label{delta' def}
\deltaprime(\Gamma)=\inf\lbrace s>0\,|\,\Pprime_s(xK,yK) <+\infty\rbrace
\end{align}
and the modified orbital counting function
\begin{align}\label{N'CountingFunction}
\NRprime(xK,yK)=|\lbrace\gamma\in\Gamma\,|\,\dprime(xK,\gamma yK)\le R\rbrace|
\qquad \forall\,R\ge0,\,\forall\,x,y\in G.
\end{align}
The following proposition shows that \eqref{PprimeSeries}, \eqref{delta' def} and \eqref{N'CountingFunction} share the properties of their classical counterparts.

\begin{proposition}\label{delta' prop}
The following assertions hold:
\begin{enumerate}[leftmargin=8mm]
\item[\rm (i)]
$\deltaprime{(\Gamma)}$ does not depend on the choice of $x$ and $y$. 
\item[\rm (ii)]
$0\le\deltaprime{(\Gamma)}\le2\hspace{.2mm}\|\rho\|$. 
\item[\rm (iii)]
For every $x,y \in G$,
\begin{align}\label{limsup}
\deltaprime(\Gamma)=\limsup_{R\to+\infty}\frac{\log\NRprime(xK,yK)}R.
\end{align}
\end{enumerate}

\begin{remark}
It follows from \eqref{d and d'} that
\begin{align*}
P_s(xK,yK)\le\Pprime_s(xK,yK)\le P_{\frac{\rho_{\text{min}}}{\|\rho\|}s}(xK,yK)
\end{align*}
and
\begin{align*}
N_R(xK,yK)\le\NRprime(xK,yK)\le N_{\frac{\|\rho\|}{\rho_{\text{min}}}R}(xK,yK).
\end{align*}
Hence
\begin{align*}
0\le\delta{(\Gamma)}\le\deltaprime{(\Gamma)}
\le \frac{\|\rho\|}{\rho_{\text{min}}}\,\delta{(\Gamma)}.
\end{align*}
\end{remark}
\end{proposition}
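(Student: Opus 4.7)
\medskip\noindent\textbf{Proof plan.}
The plan is to adapt the classical Poincar\'e-series arguments to the polyhedral setting. All three assertions rely only on two facts already at hand: that $\dprime$ is a $G$-invariant distance (established in the previous proposition), and the ball volume estimate of Lemma~\ref{ball vol}. I would treat (i) and (iii) first, as they are formal consequences of $\dprime$ being a distance, and then deduce (ii) from (iii) and the volume estimate.

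For (i), the starting point is the triangle inequality together with $G$-invariance of $\dprime$, which yield the uniform bound
\[
\bigl|\dprime(x_1K,\gamma y_1K)-\dprime(x_2K,\gamma y_2K)\bigr|
\,\le\,\dprime(x_1K,x_2K)+\dprime(y_1K,y_2K)\,=:\,C
\]
for every $\gamma\in\Gamma$. This gives the two-sided comparison
\[
e^{-sC}\,\Pprime_s(x_2K,y_2K)\,\le\,\Pprime_s(x_1K,y_1K)\,\le\,e^{sC}\,\Pprime_s(x_2K,y_2K),
\]
so that the set of $s>0$ for which $\Pprime_s$ is finite, and in particular its infimum $\deltaprime(\Gamma)$, does not depend on the base points.

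For (iii), write $\delta^{*}$ for the right-hand side of \eqref{limsup}. The upper bound $\deltaprime(\Gamma)\le\delta^{*}$ will come from a dyadic splitting: given $s>\delta^{*}$, fix $s_0\in(\delta^{*},s)$ so that $N'_{R}(xK,yK)\lesssim e^{s_0R}$ for $R$ large, and partition $\Gamma$ according to the integer part of $\dprime(xK,\gamma yK)$ to obtain
\[
\Pprime_s(xK,yK)\,\le\,\sum\nolimits_{n\ge 0}N'_{n+1}(xK,yK)\,e^{-sn}
\,\lesssim\,\sum\nolimits_{n\ge 0}e^{(s_0-s)n}\,<\,+\infty.
\]
For the reverse inequality $\deltaprime(\Gamma)\ge\delta^{*}$, given $s<s^{*}<\delta^{*}$ I would pick a sequence $R_k\to+\infty$ along which $N'_{R_k}(xK,yK)\ge e^{s^{*}R_k}$; the crude estimate
\[
\Pprime_s(xK,yK)\,\ge\,N'_{R_k}(xK,yK)\,e^{-sR_k}\,\ge\,e^{(s^{*}-s)R_k}\,\longrightarrow\,+\infty
\]
forces divergence, so that $\deltaprime(\Gamma)\ge s$.

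For (ii), $\deltaprime(\Gamma)\ge 0$ is immediate, and in view of (iii) the upper bound will follow from a volume bound on $N'_R$. Since $\Gamma$ acts freely and properly discontinuously on $X$ and $\dprime$ induces the same topology as $d$, there exists $\varepsilon>0$ such that the balls $\Bprime_\varepsilon(\gamma yK)$, $\gamma\in\Gamma$, are pairwise disjoint; the triangle inequality places all of them inside $\Bprime_{R+\varepsilon}(xK)$ as soon as $\dprime(xK,\gamma yK)\le R$. Using $G$-invariance of the Riemannian volume together with Lemma~\ref{ball vol}, this packing argument gives
\[
N'_R(xK,yK)\,|\Bprime_\varepsilon(eK)|\,\le\,|\Bprime_{R+\varepsilon}(eK)|\,\asymp\,R^{\ell-1}e^{2\|\rho\|(R+\varepsilon)}
\]
for $R$ large, and feeding this into \eqref{limsup} yields $\delta^{*}\le 2\|\rho\|$. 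The one delicate point to watch, which is also the whole raison d'\^etre of $\dprime$, is to draw the volume bound directly from Lemma~\ref{ball vol}: passing instead through the Riemannian comparison \eqref{d and d'} would reintroduce the factor $\|\rho\|/\rho_{\text{min}}$ that Theorem~\ref{TheoremFirstImprovement} is designed to eliminate.
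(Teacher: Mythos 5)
Your proposal is correct and uses the same ingredients as the paper's proof: the triangle inequality for (i), the packing argument (disjoint $\dprime$-balls plus Lemma~\ref{ball vol}) for the upper bound on the counting function, and the dyadic series--counting-function comparison for (iii). The only organizational difference is that you deduce (ii) from (iii) via the bound $N'_R \lesssim R^{\ell-1}e^{2\|\rho\|R}$, whereas the paper proves (ii) directly by comparing $\Pprime_s(eK,eK)$ to the integral $\int_X e^{-s\dprime(xK,eK)}\,dx$ (via the integration formula in Cartan coordinates) and then separately establishes the counting bound to show that the $\limsup$ in (iii) is finite; your route avoids this small redundancy.
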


\begin{proof}

\noindent(i) follows from the triangular inequality.
More precisely, let $x_1,y_1,x_2,y_2\in G$ and $s>0$.
Then
\begin{align*}
\dprime(x_2K,\gamma y_2K)\le\dprime(x_2K,x_1K)+\dprime(x_1K,\gamma y_1K)+
\underbrace{
\dprime(\gamma y_1K,\gamma y_2K)
}_{\dprime(y_1K,\,y_2K)}
\quad\forall\,\gamma\in\Gamma,
\end{align*}
hence
\begin{align*}
\underbrace{\sum\nolimits_{\gamma\in\Gamma}e^{-s\,\dprime(x_1K,\gamma y_1K)}}_{\Pprime_s(x_1K,y_1K)}
\le e^{s\lbrace\dprime(x_1K,x_2K)+\dprime(y_1K,y_2K)\rbrace}
\underbrace{\sum\nolimits_{\gamma\in\Gamma}e^{-s\,\dprime(x_2K,\gamma y_2K)}}_{\Pprime_s(x_2K,y_2K)}.
\end{align*}

\noindent (ii) According to (i), let us show, without loss of generality, that $\Pprime_s(eK,eK)<+\infty$ for every $s>2\hspace{.2mm}\|\rho\|$.
According to Lemma \ref{ball disjoint} below, there exists $r>0$ such that the balls
$\Bprime_{r}(\gamma K)$, with $\gamma\in\Gamma$, are pairwise disjoint in $G/K$.
Let us apply the integration formula \eqref{IntegrationFormula} to the function
\begin{align*}
f_{s}(xK)=\sum\nolimits_{\gamma\in\Gamma}e^{-s\dprime(\gamma{K},eK)}\mathbf{1}_{\Bprime_r(\gamma K)}(xK).
\end{align*}
On the one hand, as
\begin{align*}
\bigl|\dprime(xK,eK)-\dprime(\gamma K,eK)\bigr|\le r
\qquad\forall\,xK\in\Bprime_r(\gamma K),
\end{align*}
we have
\begin{align*}
\int_Xd(xK)\,f_{s}(xK)
\asymp\sum\nolimits_{\gamma\in\Gamma}e^{-s\dprime(\gamma K,eK)}
\underbrace{|\Bprime_r(\gamma K)|}_{|\Bprime_r(eK)|}
\asymp\Pprime_s(eK,eK).
\end{align*}
On the other hand,
\begin{align*}
\int_Xd(xK)\,f_{s}(xK)
&\lesssim\int_Xd(xK)\,e^{-s\dprime(xK,eK)}\\
&\asymp\int_{\mathfrak{a}^+}dH\,\omega(H)\,e^{-s\langle\frac\rho{\|\rho\|},H\rangle}
\lesssim\int_{\mathfrak{a}^+}dH\,e^{-(\frac s{\|\rho\|}-2)\langle\rho,H\rangle}
\end{align*}
is finite if $s>2\hspace{.2mm}\|\rho\|$.
Thus $\Pprime_s(eK,eK)<+\infty$ in that case, and consequently $\deltaprime(\Gamma)\le2\hspace{.2mm}\|\rho\|$.

\noindent(iii)
Denote the right hand side of \eqref{limsup} by $L(xK,yK)$
and let us first show that $L(xK,yK)$ is finite.
By applying Lemma \ref{ball disjoint} below to $y^{-1}\Gamma y$,
we deduce that there exists $r>0$ such that
the balls $\Bprime_r(\gamma yK)$, with $\gamma\in\Gamma$, are pairwise disjoint.
Set
\begin{align*}
\GammaRprime(xK,yK)=\lbrace\gamma\in\Gamma\,|\,\dprime(xK,\gamma yK)\le R\rbrace
\qquad\forall\,R\ge0,\,\forall\,x,y\in G.
\end{align*}
Then the ball $\Bprime_{R+r}(xK)$ contains the disjoint balls $\Bprime_r(\gamma yK)$,
with $\gamma\in\GammaRprime(xK,$ $yK)$. By computing volumes, we estimate
\begin{align*}
\NRprime(xK,yK)=|\GammaRprime(xK,yK)|\le\frac{|\Bprime_{R+r}(xK)|}{|\Bprime_r(eK)|}
\asymp(1+R)^{\ell-1}e^{2\hspace{.2mm}\|\rho\|R}.
\end{align*}
Hence $L(xK,yK)\le2\hspace{.2mm}\|\rho\|$.
Let us next show that $L(xK,yK)$ is actually independent of $x,y\in G$.
Given $x_1,y_1,x_2,y_2\in G$ and $R_1>0$, let
\begin{align*}
R_2=R_1+\dprime(x_1K,x_2K)+\dprime(y_1K,y_2K).
\end{align*}
Then the triangular inequality
\begin{align*}
\dprime(x_2K,\gamma y_2K)\le
\dprime(x_2K,x_1K)+\dprime(x_1K,\gamma y_1K)+\underbrace{\dprime(\gamma y_1K,\gamma y_2K)}_{\dprime(y_1K,\,y_2K)}
\end{align*}
implies successively
\begin{align*}
\Gamma_{\hspace{-.2mm}R_1}^{\hspace{.2mm}\prime}(x_1K,y_1K)
&\subset\Gamma_{\hspace{-.2mm}R_2}^{\hspace{.2mm}\prime}(x_2K,y_2K),\\
N_{\hspace{-.2mm}R_1}^{\hspace{.2mm}\prime}(x_1K,y_1K)
&\le N_{\hspace{-.2mm}R_2}^{\hspace{.2mm}\prime}(x_2K,y_2K),\\
L(x_1K,y_1K)
&\le L(x_2K,y_2K).
\end{align*}

\noindent Let us finally prove the equality between $\deltaprime(\Gamma)$ and $L=L(eK,eK)$.
For this purpose, observe that
\begin{align}
\Pprime_s
&=1+\sum\nolimits_{R\in\N^*}\sum\nolimits_{
\gamma\in\Gamma_{\!R}^{\hspace{.2mm}\prime}\smallsetminus\hspace{.2mm}\Gamma_{\!R-1}^{\hspace{.2mm}\prime}
}\!e^{-s\,\dprime(eK,\gamma K)}
\nonumber\\
&\asymp1 +\sum\nolimits_{R\in\N^*}\bigl(\NRprime-N_{\hspace{-.2mm}R-1}^{\hspace{.2mm}\prime}\bigr)\,e^{-sR}\nonumber\\
&\asymp\sum\nolimits_{R\in\N}\NRprime\,e^{-sR},
\label{series}
\end{align}
where we have written for simplicity
\begin{align*}
\Pprime_s=\Pprime_s(eK,eK),\quad
\GammaRprime=\GammaRprime(eK,eK)
\quad\text{and}\quad
\NRprime=\NRprime(eK,eK).
\end{align*}

\noindent One the one hand, let $s>L$ and set $\epsilon=\frac{s-L}2$.
By definition of $L$,
\begin{align*}
\NRprime\lesssim e^{(L+\epsilon)R}
\qquad\forall\,R\ge0.
\end{align*}
Hence
\begin{align*}
\Pprime_s\lesssim\sum\nolimits_{R\in\N}e^{-\epsilon R}<+\infty.
\end{align*}
One the other hand, let $s<L$.
By definition of $L$,
there exists a sequence of integers $1<R_1<R_2<\cdots\to+\infty$ such that
\begin{align*}
N_{\hspace{-.2mm}R_j}^{\hspace{.2mm}\prime}\ge e^{s R_j}
\qquad\forall\,j\in\N^*.
\end{align*}
Hence the series \eqref{series} diverges.
\end{proof}

\begin{remark}
Here is an example where \,$\delta\ssb<\ssb\delta^{\ssf\prime}$.
Consider the product
\vspace{.5mm}

\centerline{$
\Gamma\msf\backslash\ssf G/K\ssb
=(\ssf\Gamma_{\ssb1}\backslash\ssf G_{\vsf\vsf1}/K_{\ssb1})\msb
\times\msb(\ssf\Gamma_{\ssb2}\backslash\ssf G_{\ssf2\vsf}/K_{\vsb2})
$}\vspace{.5mm}
\noindent 
of two locally symmetric spaces of rank one,
with parameters \msf$\rho_1$, $\delta_1$ and \msf$\rho_2$\vsf, $\delta_2$\ssf.
Then

\begin{equation}\label{EstimateDelta}
\delta\le\sqrt{\vsf\delta_1^{\ssf2}\!+\ssb\delta_2^{\ssf2}\msf}
\end{equation}
and
\begin{equation}\label{EstimateDeltaPrime}
\delta^{\ssf\prime}
\ge\sqrt{\rho_1^{\ssf2}\!+\ssb\rho_2^{\ssf2}\ssf}\,
\max\,\bigl\{\tfrac{\delta_1}{\rho_1},\tfrac{\delta_2}{\rho_2}\bigr\}\msf.
\end{equation}
Hence \,$\delta\ssb<\ssb\delta^{\ssf\prime}$ if \,$\tfrac{\delta_1}{\rho_1}\msb\ne\msb\tfrac{\delta_2}{\rho_2}$\vsf.
Notice that there are plenty of such products,
starting with the case where \,$\delta_1\!=\ssb2\ssf\rho_1$ and \,$\delta_2\msb=\ssb0$\ssf.
Let us first prove \eqref{EstimateDelta} and begin with some notation.
Write for simplicity
\vspace{.5mm}

\centerline{
$N_{1,\vsf R}\ssb=\msb(N_1)_{\vsb R\ssf}(e\vsf K_{\vsb1},e\vsf K_{\vsb1})$\ssf,
\ssf$N_{2\vsf,\vsf R}\ssb=\msb(N_2)_{\vsb R\ssf}(e\vsf K_{\vsb2},e\vsf K_{\vsb2})$
\ssf and \,$N_R\ssb=\msb N_{\vsb R\ssf}(e\vsf K,e\vsf K)$\ssf,
}\vspace{.5mm}

\noindent
for every \msf$R\ssb\ge\ssb0$\ssf.
Moreover, for every \msf$D\msb\subset\ssb\smash{\mathbb{R}_+^2}$,
denote by \ssf$N(D)$ the number of \,$\gamma\ssb=\ssb(\gamma_1,\gamma_{\vsf2})$
in \,$\Gamma\msb=\ssb\Gamma_{\ssb1}\msb\times\ssb\Gamma_{\ssb2}$ \ssf such that
\msf$\bigl({d}_{\vsf1\vsb}(\gamma_1K_{\ssb1},e\vsf K_{\ssb1})\vsf,{d}_2(\gamma_{\vsf2}K_{\vsb2},e\vsf K_{\vsb2})\ssb\bigr)$
belongs to $D$.
In \,$\mathbb{R}_+^2$ we consider the covering of
\msf$D_R\ssb=\ssb\{\vsf(R_1\ssb,R_2)\msb\in\msb\mathbb{R}_+^2\msf|\msf R_1^2\msb+\msb R_2^2\ssb\le\msb R^2\vsf\}$
by the two segments \,$[\ssf0\ssf,\ssb R\ssf]\msb\times\msb\{0\}$\vsf, $\{0\}\msb\times\msb[\ssf0\ssf,\ssb R\ssf]$
and by the squares \,$Q_{j_1\vsb,\ssf j_2}\msb=\ssb(j_1\vsb,j_1\!+\!1]\ssb\times\msb(j_2,j_2\!+\!1]\ssf$,
with \,$j_1^2\msb+\ssb j_2^2\msb<\msb R^2$.
Then

\begin{equation}\label{Inequality1}
N_R=N(D_R)\le N_{1,\vsf R}+N_{2\vsf,\vsf R}+\ssb\sum\nolimits_{\vsf j_1^2\vsb+j_2^2<R^2}\ssb N(Q_{j_1\vsb,\ssf j_2})\msf,
\end{equation}

\noindent
with
\begin{equation}\label{Inequality2}
N(Q_{j_1\vsb,\ssf j_2})\ssb
=\ssb(N_{1,\ssf j_1\vsb\vsb+1}\!-\msb N_{1,\ssf j_1}\vsb)\ssf(N_{2\vsf,\ssf j_2\vsb+1}\!-\msb N_{2\vsf,\ssf j_2})\ssb
\le\vsb N_{1,\ssf j_1\vsb\vsb+1}\ssf N_{2\vsf,\ssf j_2\vsb+1}\vsf.
\end{equation}

\noindent
Given \,$s_1\!>\ssb\delta_1$ and \,$s_2\msb>\ssb\delta_2$\ssf,
there exist \,$C\msb\ge\msb1$ such that
\begin{equation}\label{Inequality3}
N_{1,\vsf R}\ssb\le\ssb C\ssf e^{\ssf s_1R}
\quad\text{and}\quad
N_{2\vsf,\vsf R}\ssb\le\ssb C\ssf e^{\ssf s_2\vsf R}
\end{equation}

\noindent
for every \,$R\ssb\ge\ssb0$\ssf.
By combining \eqref{Inequality1}, \eqref{Inequality2} and \eqref{Inequality3}, we get
\begin{equation}\label{Inequality4}
N_R\le C\ssf e^{\ssf s_1R}+C\ssf e^{\ssf s_2\vsf R}
+C^{\vsf2}\sum\nolimits_{\vsf j_1^2\vsb+j_2^2<R^2}\ssb e^{\ssf s_1\vsb(j_1\ssb+1)\vsf+\vsf s_2(j_2\vsb+1)}\msf.
\end{equation}

\noindent
Up to a multiplicative constant, the right hand side of \eqref{Inequality4} is bounded above by the integral
\begin{equation}\label{Inequality5}
\int_{\ssf\mathbb{R}_+^2\vsb\vsb\cap B(0\vsf,R+2)}\hspace{-1mm}d\vsf R_1\ssf d\vsf R_2\,e^{\ssf s_1\vsb\vsb R_1\vsb+\ssf s_2R_2}
=\ssb\int_{\ssf0}^{R\vsf+2}\hspace{-1mm}dr\,r\int_{\ssf0}^{\frac\pi2}\!d\theta\,e^{\ssf r\ssf(s_1\ssb\cos\theta\ssf+\ssf s_2\sin\theta)}\msf.
\end{equation}

\noindent
As the function \,$\theta\vsb\longmapsto\ssb s_1\ssb\cos\theta\ssb+\ssb s_2\sin\theta$ \ssf
reaches its maximum $\sqrt{\vsb s_1^2\msb+\ssb s_2^2\ssf}$ at \,$\theta_0\ssb=\ssb\arctan\frac{s_2}{s_1}$,
the latter integral is itself bounded above by
\begin{equation}\label{Inequality6}
\tfrac\pi2\int_{\ssf0}^{R\vsf+2}\hspace{-1mm}dr\,r\,e^{\ssf r\vsf\sqrt{\ssb s_1^2\vsb+s_2^2}}
\le\tfrac\pi2\ssf\tfrac{R+2}{\sqrt{s_1^2\ssb+s_2^2\ssf}}\,e^{\ssf(\vsb R+2)\sqrt{\ssb s_1^2\vsb+s_2^2}}
\end{equation}

\noindent
In conclusion, we obtain
\vspace{1mm}

\centerline{$
\tfrac{\log N_R}R\le
\tfrac{2\log C\vsf+\ssf\log\pi\vsf-\ssf\log2\ssf-\frac12\ssb\log\vsf(\vsb s_1^2+s_2^2)}R+\tfrac{\log\vsf(\vsb R\vsf+\vsf2)}R+\tfrac{R\vsf+2}R\ssf\sqrt{\ssb s_1^2\!+\msb s_2^2\ssf}
$}\vspace{1.5mm}

\noindent
by combining \eqref{Inequality4}, \eqref{Inequality5} and \eqref{Inequality6},
hence \,$\delta\ssb\le\msb\sqrt{\ssb s_1^2\msb+\ssb s_2^2\ssf}$ \ssf by letting \,$R\msb\to\msb+\infty$\ssf,
and finally \,$\delta\ssb\le\msb\sqrt{\ssb\delta_1^2\!+\ssb\delta_2^2\ssf}$ \ssf
by letting \,$s_1\!\searrow\ssb\delta_1$ and \,$s_2\!\searrow\ssb\delta_2$\vsf.
Let us turn to the proof of \eqref{EstimateDeltaPrime}.
Set \,$\|\rho\|\ssb=\ssb\sqrt{\rho_1^{\ssf2}\!+\ssb\rho_2^{\ssf2}\ssf}$
and assume that \,$\tfrac{\delta_1}{\rho_1}\msb\ge\msb\tfrac{\delta_2}{\rho_2}$\vsf. As

\centerline{$
d^{\ssf\prime}(\gamma\ssf K,e\vsf K)\ssb
=\ssb\tfrac{\rho_1}{\|\rho\|}\,{d}_1(\gamma_1\vsf K_{\ssb1},e\vsf K_{\ssb1})\ssb
+\ssb\tfrac{\rho_2}{\|\rho\|}\,{d}_2(\gamma_2\vsf K_{\vsb2},e\vsf K_{\vsb2})\msf,
$}\vspace{.5mm}

\noindent
the set \,$\{\ssf\gamma\msb\in\msb\Gamma\,|\,d^{\ssf\prime}(\gamma\ssf K,e\vsf K)\ssb\le\ssb R\ssf\}$ contains the product
\vspace{.5mm}

\centerline{$
\{\ssf\gamma_{\vsf1}\hspace{-.8mm}\in\msb\Gamma_{\ssb1}\msf|\,{d}_1(\gamma_1\vsb K_{\ssb1},e\vsf K_{\ssb1})\msb
\le\msb\tfrac{\|\rho\|}{\rho_1}R\ssf\}\ssb\times\ssb\{e\}\msf,
$}

\noindent
for every \,$R\ssb\ge\ssb0$\ssf.
Hence \msf$N_{\ssb R}^{\ssf\prime}\ssb\ge\ssb N_{1\vsf,\smash{\frac{\|\rho\|}{\rho_1}}\vsb R}^{\vphantom{|}}$\msf,
where \,$N_{\ssb R}^{\ssf\prime}\ssb=\msb N_{\vsb R\ssf}^{\ssf\prime}(e\vsf K,e\vsf K)$\vsf,
and conseqently \,$\delta^{\ssf\prime}\msb\ge\ssb\tfrac{\|\rho\|}{\rho_1}\ssf\delta_1$.
\end{remark}

\begin{lemma}\label{ball disjoint}
\footnote{As observed by the referee, Lemma 3 still holds without the torsion-free assumption, provided that $\gamma$ runs through $\Gamma\backslash(\Gamma\cap{K})$.}
There exists $r>0$ such that the balls
$\Bprime_{r}(\gamma K)$, with $\gamma\in\Gamma$,
are pairwise disjoint in $G/K$.
\end{lemma}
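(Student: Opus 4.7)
The plan is to reduce the pairwise disjointness to a uniform positive lower bound on the polyhedral separation of orbit points of $\Gamma$, then obtain such a bound by combining proper discontinuity (which gives a Riemannian lower bound) with the comparison \eqref{d and d'} (which transfers this to the polyhedral distance $\dprime$).

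First, by $G$-invariance of $\dprime$ and the triangle inequality, two balls $\Bprime_r(\gamma_1 K)$ and $\Bprime_r(\gamma_2 K)$ intersect only if $\dprime(\gamma_1 K, \gamma_2 K) = \dprime(eK, \gamma_1^{-1}\gamma_2 K) \le 2r$. Hence it is enough to exhibit an $r > 0$ such that
\[
\dprime(eK, \gamma K) > 2r \qquad \forall\,\gamma \in \Gamma \setminus \{e\}.
\]

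To produce such an $r$, I will invoke the standing hypothesis that $\Gamma$ is discrete, torsion-free and acts freely and properly discontinuously on $X$. Together with freeness this implies that the orbit $\Gamma \cdot eK$ is a closed discrete subset of $X$; equivalently, there exists $r_0 > 0$ such that $d(eK, \gamma K) > 2r_0$ for every $\gamma \in \Gamma \setminus \{e\}$. (Indeed, proper discontinuity applied to the compact set $\overline{B_1(eK)}$ leaves only finitely many $\gamma$ with $\gamma K \in \overline{B_1(eK)}$, and by freeness none of them equals $e$, so we may take $r_0$ to be half the minimum Riemannian distance from $eK$ to these finitely many orbit points.)

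Finally, the comparison \eqref{d and d'} gives
\[
\dprime(eK, \gamma K) \ge \tfrac{\rho_{\text{min}}}{\|\rho\|}\,d(eK, \gamma K) > \tfrac{\rho_{\text{min}}}{\|\rho\|}\,2r_0
\qquad\forall\,\gamma \in \Gamma \setminus \{e\},
\]
so choosing $r \in \bigl(0, \tfrac{\rho_{\text{min}}}{\|\rho\|}r_0\bigr)$ yields the required pairwise disjointness. There is no substantive obstacle here: the argument is essentially a packaging of properness plus the metric comparison, and all the nontrivial work (existence of a uniform Riemannian separation of orbits, and the inequality $\dprime \ge \frac{\rho_{\text{min}}}{\|\rho\|} d$) has already been established.
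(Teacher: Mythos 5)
Your argument is correct, and it proves the same key fact as the paper—namely that the orbit $\Gamma\cdot eK$ is uniformly separated from $eK$—but the implementation differs in two small ways. The paper works directly with the polyhedral distance: it uses discreteness of $\Gamma$ to show that $\Gamma\cap\Gprime_{2r}$ is finite for each $r$ (since $\Gprime_{2r}$ is compact), and torsion-freeness to conclude $\gamma^+\neq0$ for $\gamma\neq e$, so that some $r>0$ yields $\Gamma\cap\Gprime_{2r}=\lbrace e\rbrace$; this needs no comparison with the Riemannian metric. You instead establish the Riemannian separation $d(eK,\gamma K)>2r_0$ via proper discontinuity and freeness, and then transfer it to $\dprime$ through the inequality $\dprime\ge\frac{\rho_{\text{min}}}{\|\rho\|}\,d$ from \eqref{d and d'}. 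Both routes are legitimate under the paper's standing hypotheses; the paper's version is marginally more self-contained (it does not invoke \eqref{d and d'}) and makes transparent exactly which assumption on $\Gamma$ is used and why (discreteness for finiteness, torsion-freeness to rule out nontrivial $\gamma\in\Gamma\cap K$), which is also the point of the referee's footnote about replacing torsion-freeness by restricting to $\Gamma\smallsetminus(\Gamma\cap K)$.
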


\begin{proof}
Let $r>0$. As $\Gamma$ is discrete in $G$, its intersection with the compact subset
\begin{align*}
\Gprime_r=\lbrace y\in G\,|\,\dprime(yK,eK) \le r\rbrace=K{\bigl(}\exp
\lbrace H\in\overline{\mathfrak{a}^+}\,|\,{\langle\rho,H\rangle\le\|\rho\|r}\rbrace \bigr)K
\end{align*}
is finite. Moreover, as $\Gamma$ is torsion-free,
\begin{align*}
\gamma^+\neq0\qquad\forall\,\gamma\in\Gamma\backslash\lbrace e\rbrace.
\end{align*}
Hence there exists $r>0$ such that $\Gamma\cap\Gprime_{2r}=\lbrace e\rbrace$,
which implies that the sets $\gamma\Gprime_r$ are pairwise disjoint in $G$.
In other words, the balls $\Bprime_r(\gamma K)$ are pairwise disjoint in $G/K$.
\end{proof}

\vspace{10pt}
By using $\deltaprime{(\Gamma)}$,
we prove Theorem \ref{TheoremFirstImprovement},
which improves the lower bound in Theorem \ref{leuzinger}.

\begin{proof}[Proof of Theorem \ref{TheoremFirstImprovement}]
Let us resume the approach in \cite[Section 4]{Cor1990} and \cite[Section 3]{Leu2004}.
It consists in studying the convergence of the positive series
\begin{align}\label{GreenSeries}
g_\zeta^{\Gamma} (\Gamma xK,\Gamma yK)=\sum\nolimits_{\gamma\in\Gamma}g_\zeta(Ky^{-1}\gamma^{-1}xK),
\end{align}
which expresses the kernel $g_\zeta^\Gamma$ of $(-\Delta-\|\rho\|^2+\zeta^2)^{-1}$
on the locally symmetric space $Y\!=\Gamma\backslash G/K$ in terms of
the corresponding Green function $g_\zeta$ on the symmetric space $X=G/K$.
Here $\zeta>0$ and $\Gamma xK\neq\Gamma yK$.
Recall \cite[Theorem 4.2.2]{AnJi1999} that
\begin{align}\label{GreenFunction}
g_\zeta(\exp H)
\asymp\Bigl\{\prod\nolimits_{\alpha\in\Sigma_{\text{red}}^+}\!\bigl(1+\langle\alpha,H\rangle\bigr)\Bigr\}\,
\|H\|^{-\frac{\ell-1}2-|\Sigma_{\text{red}}^+|}\,e^{-\langle\rho,H\rangle-\zeta\|H\|}
\end{align}
for $H\in\overline{\mathfrak{a}^+}$ large, let say $\|H\|\ge\frac12$,
while
\begin{align*}
g_\zeta(\exp H)\asymp\begin{cases}
\|H\|^{-(n-2)}
&\text{if }\,n>2\\
\log\frac1{\|H\|}
&\text{if }\,n=2\\
\end{cases}\end{align*}
for $H$ small, lets say $0<\|H\|\le\frac12$.
Thus \eqref{GreenSeries} converges if and only if
\begin{equation}\label{SeriesGamma}\begin{aligned}
\sum\nolimits_{\gamma\in\Gamma}\,
&\Bigl\{\prod\nolimits_{\alpha\in\Sigma_{\text{red}}^+}\!
\bigl(1\!+\hspace{-.5mm}\langle\alpha,(y^{-1}\gamma^{-1}x)^+\rangle\bigr)\Bigr\}\,\times\\
&\times\,d(xK,\gamma yK)^{-\frac{\ell-1}2-|\Sigma_{\text{red}}^+|}\,
e^{-\|\rho\|\dprime(xK,\gamma yK)-\zeta d(xK,\gamma yK)}
\end{aligned}\end{equation}
converges.
Let us compare the series \eqref{SeriesGamma}
with the Poincar\'e series \eqref{PSeries} and \eqref{PprimeSeries}.
On the one hand, as $\|(y^{-1}\gamma^{-1}x)^+\|=d(xK,\gamma yK)$,
\eqref{SeriesGamma} is bounded from above by
$\Pprime_{\|\rho\|+\zeta}(xK, yK)$.
On the other hand, as
\begin{align*}
d(xK,\gamma yK)^{-\frac{\ell-1}2-|\Sigma_{\text{red}}^+|}
\gtrsim e^{-\epsilon d(xK,\gamma yK)}
\end{align*}
for every $\epsilon>0$, \eqref{SeriesGamma} is bounded from below by
$P_{\|\rho\|+\zeta+\epsilon}(xK, yK)$.
Hence \eqref{SeriesGamma} converges
if $\|\rho\|+\zeta>\deltaprime{(\Gamma)}$,
i.e., $\zeta>\deltaprime{(\Gamma)}-\|\rho\|$,
while $\eqref{SeriesGamma}$ diverges
if $\zeta<\delta{(\Gamma)}-\|\rho\|$.
We conclude by using the fact \cite[Section 4]{Cor1990} that
$\lambda_0(Y)$ is the supremum of $\|\rho\|^2-\zeta^2$
over all $\zeta>0$ such that \eqref{GreenSeries} converges.
\end{proof}

Next statement is obtained by combining this lower bound with the upper bound in Theorem \ref{leuzinger}.

\begin{corollary}\label{CorollaryFirstImprovement}
The following estimates hold for $\lambda_0(Y)$:
\begin{align*}\begin{cases}
\,\lambda_0(Y)=\|\rho\|^2
&\qquad\textnormal{if}\;\;\deltaprime{(\Gamma)}\le\|\rho\|,\\
\,\|\rho\|^2-(\deltaprime{(\Gamma)}-\|\rho\| )^2\le\lambda_0(Y)\le\|\rho\|^2
&\qquad\textnormal{if}\;\;
\delta{(\Gamma)}\le\|\rho\|\le\deltaprime{(\Gamma)},\\
\,\|\rho\|^2-(\deltaprime{(\Gamma)}-\|\rho\|)^2
\le\lambda_0(Y)\le\|\rho\|^2-(\delta{(\Gamma)}-\|\rho\| )^2
&\qquad\textnormal{if}\;\;\|\rho\|\le\delta{(\Gamma)}.
\end{cases}
\end{align*}
\end{corollary}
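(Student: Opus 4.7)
The plan is to derive the three cases of Corollary \ref{CorollaryFirstImprovement} by a direct case analysis, combining the lower bound just proved in Theorem \ref{TheoremFirstImprovement} with the upper bound already established in Theorem \ref{leuzinger}. The only ingredient beyond these two bounds is the comparison $\delta(\Gamma)\le\deltaprime(\Gamma)$, which follows from \eqref{d and d'} and was noted in the remark after Proposition \ref{delta' prop}; this inequality is what guarantees that the three regimes $\deltaprime(\Gamma)\le\|\rho\|$, $\delta(\Gamma)\le\|\rho\|\le\deltaprime(\Gamma)$, and $\|\rho\|\le\delta(\Gamma)$ together exhaust all possibilities.

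First, if $\deltaprime(\Gamma)\le\|\rho\|$, then a fortiori $\delta(\Gamma)\le\|\rho\|$, so the upper bound of Theorem \ref{leuzinger} yields $\lambda_0(Y)\le\|\rho\|^2$, while the first case of Theorem \ref{TheoremFirstImprovement} yields $\lambda_0(Y)\ge\|\rho\|^2$. Hence $\lambda_0(Y)=\|\rho\|^2$.

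Next, if $\delta(\Gamma)\le\|\rho\|\le\deltaprime(\Gamma)$, the upper bound of Theorem \ref{leuzinger} still gives $\lambda_0(Y)\le\|\rho\|^2$, whereas the second case of Theorem \ref{TheoremFirstImprovement} (applicable since $\|\rho\|\le\deltaprime(\Gamma)\le 2\|\rho\|$) gives
\[
\lambda_0(Y)\ge\|\rho\|^2-(\deltaprime(\Gamma)-\|\rho\|)^2.
\]

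Finally, if $\|\rho\|\le\delta(\Gamma)$, then also $\|\rho\|\le\deltaprime(\Gamma)$, so both bounds fall in their respective second regimes and we obtain
\[
\|\rho\|^2-(\deltaprime(\Gamma)-\|\rho\|)^2\le\lambda_0(Y)\le\|\rho\|^2-(\delta(\Gamma)-\|\rho\|)^2.
\]
No genuine obstacle is expected: the whole content of this corollary is the bookkeeping of which case of each theorem applies, and the consistency of the three cases rests entirely on $\delta(\Gamma)\le\deltaprime(\Gamma)$.
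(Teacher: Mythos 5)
Your proposal is correct and matches the paper's intent exactly: the paper states only that the corollary ``is obtained by combining this lower bound with the upper bound in Theorem~\ref{leuzinger},'' and your case analysis, together with the observation that $\delta(\Gamma)\le\deltaprime(\Gamma)$ makes the three regimes exhaustive, is precisely the bookkeeping the authors leave implicit.
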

\section{Second improvement}

In this section, we obtain the actual higher rank analog of Theorem \ref{corlette} by considering a further family of distances on $X$,
which reflects the large scale behavior \eqref{GreenFunction} of the Green function.
Specifically, for every $s>0$ and $x,y\in G$, let
\begin{equation}\label{ds}\begin{aligned}
\dsecond_{s}(xK,yK)
&=\min\lbrace s,\|\rho\|\rbrace\,\dprime(xK,yK)
+\max\lbrace s-\|\rho\|,0\rbrace\,d(xK,yK)\\
&=\begin{cases}
\,s\,\dprime(xK,yK)
&\text{if }\,0<s\le\|\rho\|,\\
\,\|\rho\|\,\dprime(xK,yK)+(s-\|\rho\|)\,d(xK,yK)
&\text{if }\,s\ge\|\rho\|.\\
\end{cases}\end{aligned}\end{equation}
Then \eqref{ds} defines a $G$-invariant distance on $X$ such that
\begin{align}\label{d' ds d}
s\,\dprime(xK,yK)
\le \dsecond_{s}(xK,yK)
\le s\,d(xK,yK)
\qquad\forall\,s>0,\,\forall\,x,y\in G.
\end{align}
Consider the associated Poincar\'e series
\begin{align}\label{PsecondSeries}
\Psecond_s(xK, yK) =\sum\nolimits_{\gamma\in\Gamma}e^{-\dsecond_{s}(xK,\gamma yK)}
\qquad\forall\,s>0,\,\forall\,x,y\in G
\end{align}
and its critical exponent
\begin{align*}
\deltasecond{(\Gamma)}=\inf\lbrace s>0\,|\,\Psecond_s(xK,yK)<+\infty\rbrace.
\end{align*}
It follows from \eqref{d' ds d} that
\begin{align}\label{delta delta'' delta'}
0\le\delta{(\Gamma)}
\le\deltasecond{(\Gamma)}
\le\deltaprime{(\Gamma)}\le2\hspace{.2mm}\|\rho\|.
\end{align}

\begin{proof}[Proof of Theorem \ref{second improvement}]
In the proof of Theorem \ref{TheoremFirstImprovement}
and Corollary \ref{CorollaryFirstImprovement},
we compared the series \eqref{GreenSeries}, or equivalently \eqref{SeriesGamma},
with the Poincar\'e series \eqref{PSeries} and \eqref{PprimeSeries}.
If we consider instead the Poincar\'e series \eqref{PsecondSeries},
we obtain in the same way that
\eqref{SeriesGamma} is bounded from above by $\Psecond_{\|\rho\|+\zeta}(xK, yK)$
and from below by $\Psecond_{\|\rho\|+\zeta+\epsilon}(xK, yK)$,
for every $\epsilon>0$.
Hence \eqref{SeriesGamma} converges if $\zeta>\deltasecond(\Gamma)-\|\rho\|$,
while $\eqref{SeriesGamma}$ diverges if $\zeta<\deltasecond(\Gamma)-\|\rho\|$.
We conclude as in the above-mentioned proof.
\end{proof}


\vspace{10pt}
\noindent\textbf{Acknowledgements.}
The authors are grateful to the referee for checking carefully the manuscript and making several helpful suggestions of improvement. The second author acknowledges financial support from the University of Orléans during his Ph.D. and from the Methusalem Programme \textit{Analysis and Partial Differential Equations} during his postdoc stay at Ghent University.

\printbibliography

\vspace{20pt}
\address{
\noindent\textsc{Jean-Philippe Anker:}
\href{mailto:anker@univ-orleans.fr}
{anker@univ-orleans.fr}\\
Institut Denis Poisson,
Universit\'e d'Orl\'eans, Universit\'e de Tours \& CNRS,
Orl\'eans, France}
\vspace{10pt}

\address{
\noindent\textsc{Hong-Wei Zhang:}
\href{mailto:hongwei.zhang@ugent.be}
{hongwei.zhang@ugent.be}\\
Department of Mathematics,
Ghent University,
Ghent, Belgium\\
Institut Denis Poisson,
Universit\'e d'Orl\'eans, Universit\'e de Tours \& CNRS,
Orl\'eans, France}
\end{document}